\documentclass[10pt]{amsart}
\usepackage[margin=1.5in]{geometry}
\usepackage{xcolor}
\newtheorem{theorem}{Theorem}[section]
\newtheorem{lemma}[theorem]{Lemma}
\newtheorem{proposition}{Proposition}[section]

\theoremstyle{definition}
\newtheorem{definition}[theorem]{Definition}

\theoremstyle{remark}
\newtheorem{remark}[theorem]{Remark}

\numberwithin{equation}{section}



\begin{document}
\title[Stationary quantum BGK model]{Stationary quantum BGK model for bosons and fermions in a bounded interval}

\author{Gi-Chan Bae}
\address{Department of mathematics, Sungkyunkwan University, Suwon 440-746, Republic of Korea }
\email{gcbae02@skku.edu}

\author{Seok-Bae Yun}
\address{Department of mathematics, Sungkyunkwan University, Suwon 440-746, Republic of Korea }
\email{sbyun01@skku.edu}



\keywords{Quantum BGK model, Quantum Boltzmann equation, Stationary problems, Relaxation time approximation, Inflow boundary conditions}

\begin{abstract}
In this paper, we consider the existence problem for a stationary relaxational models of the quantum Boltzmann equation. 
More precisely, we establish the existence of mild solution to the fermionic or bosonic quantum BGK model in a slab with inflow boundary data.
Unlike the classical case, it is necessary to verify that the quantum local equilibrium state is well-defined, and the transition from the non-condensed state to
the condensated state (Bosons), or from the non-saturated state to the saturated state (Fermions) does not arise in our solution space.
\end{abstract}

\maketitle

\section{Introduction}
The stationary quantum BGK model \cite{Jungel-transport,MRS,MY,N,Nt,RS,WMZ,YMCL,YYDHLZZ} in a bounded interval reads
\begin{align}\label{QBGK}
p_1\frac{\partial f}{\partial x}&=\frac{N}{\tau}(\mathcal{K}(f)-f),
\end{align}
subject to boundary conditions:
\begin{align}
f(0,p)=f_L(p) \quad \textit{for} \quad p_1>0, \qquad
f(1,p)=f_R(p) \quad \textit{for} \quad p_1<0.
\end{align}
The momentum distribution function $f(x,p)$ depends on the position $x\in[0,1]$ and the momentum $p\in\mathbb{R}^3$. The Knudsen number $\tau>0$ measures how rarefied the gas system is, and is defined by the ratio between the characteristic length and mean free path.
Throughout this paper, $\mathcal{K}$ denotes the local equilibrium of the system. For bosonic case, it represents the Bose-Einstein distribution without condensation, and
in the fermionic case, it represents the non-saturated Fermi-Dirac distribution, which will be defined below.
To present the exact form of $\mathcal{K}$, we first define the macroscopic mass, momentum and energy:
\begin{align}\label{NPE}
\begin{split}
N_f(x)=\int_{\mathbb{R}^3}f(x,p)dp,\quad P_f(x)=\int_{\mathbb{R}^3}f(x,p)pdp, \quad
E_f(x)=\int_{\mathbb{R}^3}f(x,p)|p|^2dp.
\end{split}
\end{align}
We then introduce the equilibrium parameter $a$ and $c$ defined by ( $+$ sign is for fermion and $-$ sign is for boson, see \cite{BaeY,N}):
\begin{align}\label{a,c1}
\begin{split}
\frac{N_f(x)}{\left(E_f(x)-\frac{|P_f(x)|^2}{N_f(x)}\right)^{\frac{3}{5}}}= \frac{\displaystyle\int_{\mathbb{R}^3}\frac{1}{e^{|p|^2+c(x)}\pm1}dp}{\displaystyle\left(\int_{\mathbb{R}^3}\frac{|p|^2}{e^{|p|^2+c(x)}\pm1}dp\right)^{\frac{3}{5}}},
\end{split}
\end{align}
and
\begin{align}\label{a,c2}
	\displaystyle a(x)&=\left({\int_{\mathbb{R}^3}\frac{1}{e^{|p|^2+c(x)}\pm1}dp}\right)^\frac{2}{3}N(x)^{-\frac{2}{3}}.
\end{align}
Note that $c$ is determined implicitly. For the later convenience, we define
\begin{align}\label{beta}
	\beta_{\mathcal{K}}(c)= \frac{\int_{\mathbb{R}^3}\frac{1}{e^{|p|^2+c}\pm1}dp}{\left(\int_{\mathbb{R}^3}\frac{|p|^2}{e^{|p|^2+c}\pm1}dp\right)^{\frac{3}{5}}}.
\end{align}
The relations \eqref{a,c1} and \eqref{a,c2} arise from the requirement that $\mathcal{F}$, $\mathcal{B}$ must share the same mass, momentum and energy with $f$ (See \cite{BaeY}):
Now we are ready to define the local quantum equilibriums. \cite{BaeY,EMV,LuB1,N,Za} \newline
\noindent $\bullet$ ~ {\bf Bose-Einstein distribution:} The local equilibrium for bosons is defined as follows:
\begin{align}\label{BoseEin}
\begin{cases}
\mathcal{B}_1(f)=\left(e^{a(x)\big|p-\frac{P(x)}{N(x)}\big|^2+c(x)}-1\right)^{-1}, ~ &\textit{if} \quad \frac{N}{\left(E-\frac{|P|^2}{N}\right)^{3/5}}\leq \beta_{\mathcal{B}}(0),\cr
\mathcal{B}_2(f)=\left(e^{a(x)\big|p-\frac{P(x)}{N(x)}\big|^2}-1\right)^{-1}+k(x)\delta_{p=\frac{P(x)}{N(x)}},
\quad &\textit{otherwise},
\end{cases}
\end{align}
where
\begin{align*}
k(x)=N(x)-\beta_{\mathcal{B}}(0)\left(E(x)-\frac{P(x)^2}{N(x)}\right)^{\frac{3}{5}}.
\end{align*}
The dirac delta function corresponds to Bose-Einstein condensation. $\mathcal{B}_1$ corresponds to the non-condensation case, while $\mathcal{B}_2$ is referred as the condensation case. \newline
\noindent $\bullet$ ~ {\bf Fermi-Dirac distribution:} The local equilibrium for fermions is defined as follows:
\begin{align}\label{FD}
\begin{cases}
\mathcal{F}_1(f)=\left(e^{a(x)\big|p-\frac{P(x)}{N(x)}\big|^2+c(x)}+1\right)^{-1}, ~ &\textit{if} \quad \frac{N}{\left(E-\frac{|P|^2}{N}\right)^{3/5}}< \beta(-\infty),	\cr
\mathcal{F}_2(f)=\chi_{\big|p-\frac{P(x)}{N(x)}\big|\leq \left(\frac{3N(x)}{4\pi}\right)^\frac{1}{3}}, \quad &\textit{otherwise} ,\end{cases}
\end{align}
where $\chi_{A}$ denotes the characteristic function on $A$, and the second case of $\mathcal{F}_2$ is called the saturated Fermi-Dirac distribution.\newline

Throughout this paper, we will use $\mathcal{B}(f)$ to denote the Bose-Enstein distribution without condensation $\mathcal{B}_1(f)$, while  $\mathcal{F}(f)$
is used to denote the non-saturated Fermi-Dirac distribution $\mathcal{F}_1(f)$. Also, $\mathcal{K}(f)$ denotes either $\mathcal{B}(f)$ or $\mathcal{F}(f)$.
\newline
\subsection{Brief history}
The slab problem corresponds to the situation where there is a gas flow between two parallel gas-emitting plates of infinite size. This  arise often in science and engineering,
and attracted the interest of many researchers.
In the case of the Boltzmann equation, the first mathematical study can be traced back to \cite{AC}, where the existence of a measure valued solution were investigated.
In the framework of weak solutions, Arkeryd and Nouri considered the existence of $L^1$ solution for the inflow boundary conditions in \cite{AN00,AN98} and
for the diffusive reflection conditions in \cite{AN00}. These results were extended to gas mixture problem by Brull \cite{Br1,Br2}.
Gomeshi studied the existence of unique mild solutions under the condition that the Knudsen number is sufficiently large in \cite{G}.
For the related 3d problem near equilibrium, see \cite{Kim1,Kim2}.

In the case of BGK type model, Ukai studied stationary Boltzmann BGK model in slab for fixed large boundary data in \cite{Ukai} using a Schauder type fixed
point theorem.
Nouri \cite{N} established the existence of weak solutions for the stationary quantum BGK model with a discretized condensation term in a slab.
Bang and Yun obtained the existence and uniqueness of mild solutions for the ES-BGK model under the assumption that gas is sufficiently rarefied and inflow datas are not concentrated on $p=0$ in \cite{BY}. 

The mathematical reserach for the quantum relaxation model has just started, and the literature remains extremely limited. The first mathmatical study was carried out by Nouri as mentioned above. In \cite{Brauk1,Brauk2}, Braukhoff obtained analytic solutions of quantum BGK type model arising in the study of ultracold fermionic clouds.
The global existence and asymptotic behavior of fermionic quantum BGK model near a global Fermi-Dirac distribution were studied by the authors in \cite{BaeY}.
Presently, authors are not aware of any further analytical results on the quantum BGK models.  We refer to \cite{FHJ,JP,Jin,HJW,MY,SY,WMZ,YH} for numerical studies on the quantum BGK model.

Quantum Boltzmann equation, on the other hand, has seen more progress.
We refer to \cite{BE,EMVa,EMV1,EV1,LL,LuB1,LuB7,LuB6,LuF1,NT,ST} for homogeneous problem, and \cite{AT,AN1,AN3,D} for inhomogeneous problems.

\subsection{Notations}
We define notations and norms that are frequently used throughout this paper.
\begin{itemize}
\item
Throughout this paper, we fix $\mathcal{B}(f)=\mathcal{B}_1(f)$ and $\mathcal{F}(f)=\mathcal{F}_1(f)$. Also, $\mathcal{K}(f)$ denotes either $\mathcal{B}(f)$ or $\mathcal{F}(f)$.
\item Every constants $C$ are defined generically. We also use $C_{a,b,\cdots}$ when it is necessary to explicitly show the dependence on $a,b,\cdots$. Especially, we denote $C_{l,u}$ when the constant depends only on the constants defined in (\ref{iniconst}) and $k$.
\item When there's no risk of confusion, we suppress the dependence of the macroscopic fields on $f$, and denote $N$, $P$, $E$
instead of $N_f$, $P_f$ and $E_f$.
\item We define our weighted $L^1$ norm and weighted $L^{\infty}$ as follows:
\begin{align*}
\sup_{x}||f||_{L_2^1}&=\sup_x\left\{\int_{\mathbb{R}^3}|f(x,p)|(1+|p|^2)dp\right\},	\cr
||f||_{L_2^{\infty}}&=\sup_{x,p}|f(x,p)|(1+|p|^2).
\end{align*}
\item
We use the following notation (See Ch $3$):
\begin{align*}
\beta_{\mathcal{B}}^{-1}=\big(\beta_{\mathcal{B}}\big|_{[0,\infty)}\big)^{-1}, \quad \beta_{\mathcal{F}}^{-1}=\big(\beta_{\mathcal{F}}\big|_{(-\ln3,\infty)}\big)^{-1}.
\end{align*}
\end{itemize}
This paper is organized as follows. In Section 2 we present the main result and give an example of boundary data satisfying the assumption of main theorem.
Section 3 is devoted to the fixed point setup of the problem. We define the solution space and prove that the equilibrium is well defined in this space. Some useful estimates are also introduced in this section. In Section 4, we establish that the solution operator maps the solution space into itself.  We prove the main theorem in the final Section 5 by showing that the solution operator is a contraction mapping.
\section{Main result}
In this section we present our main results. For brevity we denote
\begin{align}\label{LR}
f_{LR}(p)=f_L(p)1_{p_1>0}+f_R(p)1_{p_1<0},
\end{align}
and define the following quantities:
\begin{align}\label{iniconst}
\begin{split}
&a_u=2\int_{\mathbb{R}^3}f_{LR}dp, \quad a_l=\int_{\mathbb{R}^3}e^{-\frac{a_u}{\tau|p_1|}}f_{LR}dp,\quad a_s=\int_{\mathbb{R}^3}\frac{1}{|p_1|}f_{LR}dp,	\cr
&c_u=2\int_{\mathbb{R}^3}f_{LR}|p|^2dp,\quad c_l=\int_{\mathbb{R}^3}e^{-\frac{a_u}{\tau|p_1|}}f_{LR}|p|^2dp,\quad c_s=\int_{\mathbb{R}^3}\frac{1}{|p_1|}f_{LR}|p|^2dp,
\end{split}
\end{align}
and
\begin{align}\label{k}
k=\left(\int_{p_1>0}e^{-\frac{a_u}{\tau|p_1|}}f_L(p)|p_1|dp\right)
\left(\int_{p_1<0}e^{-\frac{a_u}{\tau|p_1|}}f_R(p)|p_1|dp\right).
\end{align}
\begin{definition}\label{mildsol}
We say that $f\in L^1_2([0,1]\times\mathbb{R}^3_p)$ is a mild solution of  (\ref{QBGK}) if $f$ satisfies
\begin{align*}
\begin{split}
f(x,p)&=e^{-\frac{1}{\tau|p_1|}\int_0^xN_f(y)dy}f_L(p)	\cr
&+\frac{1}{\tau|p_1|}\int_0^xe^{-\frac{1}{\tau|p_1|}\int_y^xN_f(z)dz}N_f(y)\mathcal{K}(f)dy \quad \textit{if} \quad p_1>0,
\end{split}
\end{align*}
and
\begin{align*}
\begin{split}
f(x,p)&=e^{-\frac{1}{\tau|p_1|}\int_x^1N_f(y)dy}f_R(p)	\cr
&+\frac{1}{\tau|p_1|}\int_x^1e^{-\frac{1}{\tau|p_1|}\int_x^yN_f(z)dz}N_f(y)\mathcal{K}(f)dy \quad \textit{if} \quad p_1<0.
\end{split}
\end{align*}
\end{definition}
Now we state our main results.
\begin{theorem}\label{mainthm}
Assume $f_L$ and $f_R$ satisfy the following conditions:
\begin{enumerate}
\item Boundary data are non-negative:
\begin{align*}
f_{LR}\geq0,
\end{align*}
\item Boundary data satisfy the following integrability conditions:
\begin{align*}
f_{LR}, \quad \frac{1}{|p_1|}f_{LR}\in L_2^1,
\end{align*}
\item Contributions of the inflow from the boundary in $p_2$ and $p_3$ directions are negligible:
\begin{align*}
\int_{\mathbb{R}^2}f_Lp_idp_2dp_3=\int_{\mathbb{R}^2}f_Rp_idp_2dp_3=0. \quad (i=2,3)
\end{align*}
\end{enumerate}
We assume further that
\begin{align}\label{inibeta}
\frac{{a_u}^{\frac{8}{5}}}{k^{\frac{3}{5}}}<\beta_{\mathcal{B}}(0)\quad(\mbox{Boson}),~\quad \frac{{a_u}^{\frac{8}{5}}}{k^{\frac{3}{5}}}<\beta_{\mathcal{F}}(-\ln 3)\quad (\mbox{Fermion}).
\end{align}
Then for sufficiently large $\tau$, there exists a unique non-negative mild solution $f$ of (\ref{QBGK}) satisfying
\begin{align}\label{upper lower}
a_l\leq N(x)\leq a_u, \quad c_l\leq E(x)\leq c_u,
\end{align}
and
\begin{align}\label{lower k}
E(x)N(x)-|P(x)|^2\geq k.
\end{align}
\end{theorem}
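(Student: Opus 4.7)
The plan is to set up a Banach fixed point argument based on the mild formulation in Definition \ref{mildsol}. First I would introduce a closed, non-empty subset $\Omega \subset L^1_2([0,1]\times\mathbb{R}^3)$ consisting of non-negative functions $f$ whose macroscopic fields (\ref{NPE}) satisfy the two-sided bounds $a_l \le N_f \le a_u$, $c_l \le E_f \le c_u$, together with $N_f E_f - |P_f|^2 \ge k$. For each $g \in \Omega$ I define the solution operator $\Phi(g)$ by the right-hand side of the mild formulation in Definition \ref{mildsol}, with $N_g$ and $\mathcal{K}(g)$ substituted on the right. The goal is to show $\Phi:\Omega \to \Omega$ and that $\Phi$ is a strict contraction for large $\tau$, after which the Banach fixed point theorem yields the unique solution.

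The logically first point to address is that $\mathcal{K}(g)$ is well-defined on $\Omega$, meaning that the local equilibrium remains in the non-condensed (resp.\ non-saturated) branch. Using the $\Omega$-bounds one has
\begin{align*}
\frac{N_g}{\bigl(E_g - |P_g|^2/N_g\bigr)^{3/5}}
= \frac{N_g^{8/5}}{\bigl(N_g E_g - |P_g|^2\bigr)^{3/5}}
\le \frac{a_u^{8/5}}{k^{3/5}},
\end{align*}
which by (\ref{inibeta}) is strictly less than $\beta_{\mathcal{B}}(0)$ in the boson case and $\beta_{\mathcal{F}}(-\ln 3)$ in the fermion case. Together with monotonicity of $\beta_{\mathcal{K}}$ (which I expect is verified in Section~3 along with the definition of $\beta_{\mathcal{K}}^{-1}$), this places the implicitly defined parameter $c(x)$ in the interior of the invertibility interval, and the first branches $\mathcal{B}_1$, $\mathcal{F}_1$ apply throughout $[0,1]$.

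Next I would verify $\Phi(\Omega)\subset\Omega$. The upper bound $N_{\Phi(g)}\le a_u$ follows because the mild representation writes $\Phi(g)$ as a convex combination (with exponential weights summing to $1$) of $f_{LR}$ and averages of $\mathcal{K}(g)$, and $\mathcal{K}(g)$ shares mass with $g$, which is bounded by $a_u$; the factor $2$ in the definition of $a_u$ gives slack when integrating against $f_{LR}$. The lower bound $N_{\Phi(g)}\ge a_l$ drops the equilibrium contribution and uses $N_g\le a_u$ to bound the decay exponential from below by $e^{-a_u/(\tau|p_1|)}$. The same strategy gives the bounds on $E_{\Phi(g)}$. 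The bound $N_{\Phi(g)}E_{\Phi(g)}-|P_{\Phi(g)}|^2 \ge k$ is the most delicate: here the vanishing transverse-moment assumption (3) of the theorem is essential, because it kills the cross terms coming from $p_2, p_3$ in the expansion of $N E - |P|^2$ and leaves a product structure that, by an inflow/outflow Cauchy--Schwarz-type bound, reduces to the product of one-sided integrals that defines $k$ in (\ref{k}).

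Finally, for the contraction, for $f,g\in\Omega$ I would expand $\Phi(f)-\Phi(g)$ into three contributions: the difference of the exponential damping factors (Lipschitz in $\int N\,dy$), the difference of the prefactor $N$, and the difference $\mathcal{K}(f)-\mathcal{K}(g)$. Each of these is controlled by $\|f-g\|_{L^1_2}$ using the uniform $\Omega$-bounds, and the mild formula carries an overall $1/\tau$ inside the integral, so for $\tau$ large the resulting constant is $<1$. I expect the principal obstacle to be the Lipschitz estimate for $\mathcal{K}(f)-\mathcal{K}(g)$, since $\mathcal{K}$ depends on $f$ through the implicitly defined $c(x)$ in (\ref{a,c1}); this requires a quantitative implicit function argument showing that $\beta_{\mathcal{K}}^{-1}$ is uniformly Lipschitz on the compact subinterval of its domain determined by $\Omega$, and then propagating this to $a(x)$ and to the exponential profile of $\mathcal{K}$, controlling the $p$-dependence via the weighted $L^1_2$ norm.
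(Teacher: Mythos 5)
Your overall strategy matches the paper's: define a solution space $\Lambda$ cut out by non-negativity, the two-sided mass/energy bounds, and $NE-|P|^2\ge k$; show the equilibrium $\mathcal{K}$ stays in the non-condensed/non-saturated branch via \eqref{inibeta} and monotonicity of $\beta_{\mathcal{K}}$; prove $\Phi(\Lambda)\subset\Lambda$; and close via Banach fixed point after a Lipschitz estimate for $\mathcal{K}$ obtained from the implicit relations \eqref{a,c1}--\eqref{a,c2}. The well-definedness step and the continuity-of-$\mathcal{K}$ plan are sound. However, two of your mechanisms for the invariance $\Phi(\Lambda)\subset\Lambda$ have genuine gaps.

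First, the upper bound $N_{\Phi(g)}\le a_u$ does not follow from the ``convex combination with weights summing to $1$, plus $\mathcal{K}(g)$ shares mass with $g$'' argument. The exponential weight $e^{-\frac{1}{\tau|p_1|}\int_0^x N_g}$ depends on $p_1$, so integrating in $p$ does not preserve the convex combination structure; even granting that the $\mathcal{K}$-part is bounded by $a_u$, the total would then read $a_u/2 + a_u$, which is not $\le a_u$. In fact the quantity
\begin{align*}
\int_{p_1>0}\int_0^x\frac{1}{\tau|p_1|}e^{-\frac{1}{\tau|p_1|}\int_y^x N_g\,dz}\,N_g(y)\,\mathcal{K}(g)(y,p)\,dy\,dp
\end{align*}
cannot be controlled by mass conservation alone because $\int_0^\infty\frac{1}{\tau p_1}e^{-a/(\tau p_1)}\,dp_1$ diverges; one needs a pointwise Gaussian bound $\mathcal{K}(g)\le C_{l,u}e^{-c|p|^2}$ uniform on $\Lambda$ (which requires the $a$, $c$ bounds from Lemma \ref{bddac}) together with the calculus estimate $\int_0^x\int_0^\infty\frac{1}{\tau|p_1|}e^{-a_l(x-y)/(\tau|p_1|)}e^{-Cp_1^2}dp_1dy\le C_{l,u}\frac{\ln\tau+1}{\tau}$. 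This makes the $\mathcal{K}$-contribution $O((\ln\tau+1)/\tau)$, and only then does the built-in factor $2$ in $a_u$ give enough slack for large $\tau$.

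Second, assumption (3) does not ``kill'' the transverse cross terms in $N_{\Phi(g)}E_{\Phi(g)}-|P_{\Phi(g)}|^2$. It annihilates the boundary contribution to $M_i=\int p_i\Phi(g)\,dp$ ($i=2,3$), but the equilibrium part of $\Phi(g)$ still has nonzero $p_2$, $p_3$ moments. Again the same decay estimate gives $|M_2|,|M_3|=O((\ln\tau+1)/\tau)$, so the cross terms are small for large $\tau$, not zero; only then does Cauchy--Schwarz followed by the $a^2-b^2$ factorization in the $p_1$-direction reduce the dominant piece to $4k$, and the conclusion $\ge k$ holds for $\tau$ large. Both gaps trace to the same missing quantitative lemma: a $\tau$-decaying bound for the inner source integral, which is the real workhorse of the invariance argument and which your sketch does not articulate.
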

\begin{remark} \label{rmk}
(1) The meaning of assumption \eqref{inibeta} will be considered in Chapter 3.
(2) Note that in (\ref{inibeta}), the fermion case is restricted to $\beta(-\ln3)$.
 This is because we don't know yet
whether $\beta(c)$ for fermion is a strictly monotone decreasing function in the whole range, even though the numerics indicate in that way. This is left as a future preject.
(3) Extending this result to include the condensated state (Boson) and the saturated state (Fermion) will be  interesting, and is left for the future.
\end{remark}
Before we move on the the proof of the theorem,  we present a simple example of boundary data which satisfies the assumption of Theorem \ref{mainthm} (1), (2), (3) and (\ref{inibeta}) for bosons. Example for fermionic particles can be constructed similarly. We define
\begin{align*}
f_L(p)=C_L1_{r_1\leq p_1 \leq r_2}e^{-\frac{|p_2|^2}{2}-\frac{|p_3|^2}{2}}, \quad f_R(p)=C_R1_{-r_2\leq p_1 \leq -r_1}e^{-\frac{|p_2|^2}{2}-\frac{|p_3|^2}{2}},
\end{align*}
for some $C_L,C_R>0$ and $r_1,r_2>0$ to be determined soon. Since it can be readily checked that they satisfy the conditions $(1)$, $(2)$, $(3)$ of Theorem \ref{mainthm},
we check the condition (\ref{inibeta}) only. We first compute $a_u$ as
\begin{align*}
a_u &= 2\int_{\mathbb{R}^3}f_{LR} dp \cr
&=2\left(C_L\int_{r_1}^{r_2}1dp_1\left(\int_{-\infty}^{\infty}e^{-\frac{|p_2|^2}{2}}dp_2\right)^2+C_R\int_{-r_2}^{-r_1}1dp_1\left(\int_{-\infty}^{\infty}e^{-\frac{|p_2|^2}{2}}dp_2\right)^2\right) \cr
&=4\pi (C_L+C_R)(r_2-r_1).
\end{align*}
We then compute
\begin{align*}
\int_{p_1>0}e^{-\frac{a_u}{\tau |p_1|}}f_L(p)p_1dp &\geq e^{-\frac{4\pi (C_L+C_R)(r_2-r_1)}{\tau r_1}}\int_{\mathbb{R}^3}f_L(p)p_1dp \cr
&=C_Le^{-\frac{4\pi (C_L+C_R)(r_2-r_1)}{\tau r_1}}\int_{r_1}^{r_2}p_1dp_1 \left(\int_{-\infty}^{\infty}e^{-\frac{|p_2|^2}{2}}dp_2\right)^2	\cr
&= \pi C_L e^{-\frac{4\pi (C_L+C_R)(r_2-r_1)}{\tau r_1}} (r_2^2-r_1^2),
\end{align*}
and, similarly,
\begin{align*}
\int_{p_1<0}e^{-\frac{a_u}{\tau|p_1|}}f_R(p)|p_1|dp &\geq \pi C_R e^{-\frac{4\pi (C_L+C_R)(r_2-r_1)}{\tau r_1}} (r_2^2-r_1^2),
\end{align*}
to get
\begin{align*}
k &= \left(\int_{p_1>0}e^{-\frac{a_u}{\tau|p_1|}}f_L(p)|p_1|dp\right)
\left(\int_{p_1<0}e^{-\frac{a_u}{\tau|p_1|}}f_R(p)|p_1|dp\right) \cr
&= \pi^2 C_LC_Re^{-\frac{8 \pi (C_L+C_R)(r_2-r_1)}{\tau r_1}} (r_2^2-r_1^2)^2.
\end{align*}
Hence we derive
\begin{align*}
\frac{{a_u}^{\frac{8}{5}}}{k^{\frac{3}{5}}} &= \frac{\left(4\pi (C_L+C_R)(r_2-r_1)\right)^{\frac{8}{5}}}{\left(\pi^2 C_LC_Re^{-\frac{8 \pi (C_L+C_R)(r_2-r_1)}{\tau r_1}} (r_2^2-r_1^2)^2\right)^{\frac{3}{5}}}	\cr
&=4^{\frac{8}{5}}\pi^{\frac{2}{5}}\frac{(C_L+C_R)^\frac{8}{5}}{(C_LC_R)^{\frac{3}{5}}}\frac{(r_2-r_1)^{\frac{2}{5}}}{(r_2+r_1)^{\frac{6}{5}}}e^{\frac{24 \pi (C_L+C_R)(r_2-r_1)}{5 \tau r_1}}.
\end{align*}
This shows that a proper choice of $C_L$, $C_R$ and $r_1$, $r_2$ gives the desired condition.

\section{Fixed point set-up}
We define the solution space by
\begin{align*}
\Lambda = \left\{f\in L_2^1([0,1]\times\mathbb{R}^3_p)|~f ~ \textit{satisfies}~ (\mathcal{A},\mathcal{B},\mathcal{C})\right\},
\end{align*}
endowed with the metric $d(f,g)=\sup_{x\in[0,1]}||f-g||_{L^1_2}$.
\begin{itemize}
\item ($\mathcal{A}$) $f$ is non-negative:
\begin{align*}
f(x,p)\geq 0 \quad \textit{for} \quad x,p \in [0,1] \times \mathbb{R}^3.
\end{align*}
\item ($\mathcal{B}$) Mass and energy satisfy
\begin{align*}
a_l \leq \int_{\mathbb{R}^3}f(x,p)dp \leq a_u, \quad c_l \leq \int_{\mathbb{R}^3}f(x,p)|p|^2dp \leq c_u.
\end{align*}
\item ($\mathcal{C}$) $f$ satisfies
\begin{align*}
\left(\int_{\mathbb{R}^3}f(x,p)dp\right)\left(\int_{\mathbb{R}^3}f(x,p)|p|^2dp\right)-\left|\int_{\mathbb{R}^3}f(x,p)pdp\right|^2 \geq k.
\end{align*}
\end{itemize}
\subsection{determination of $a$,$b$ and $c$}
We first verify that for any distribution function $f$ that lies in $\Lambda$, the nonlinear relations \eqref{a,c1} and $\eqref{a,c2}$ admit a unique set of solution $a$ and $c$, so that the local equilibrium $\mathcal{K}(f)$ is well defined. It is clear that $a$ is uniquely determined by \eqref{a,c2}  once the unique existence of $c$ is determined from (\ref{a,c1}).
Note that, in view of the definition of (\ref{beta}), the nonlinear relation (\ref{a,c1}) is rewritten by
\begin{align}\label{by}
\beta_{\mathcal{F}}(c)=\frac{N(x)}{\left(E(x)-\frac{|P(x)|^2}{N(x)}\right)^{\frac{3}{5}}}.
\end{align}
Therefore, it is sufficient to show that $\beta_{\mathcal{F}}$ is a monotone function, and r.h.s of (\ref{by}) lies in the range of $\beta_{\mathcal{F}}$.
For this we recall the following lemma:
\begin{lemma}\label{monotone}\emph{\cite{BaeY,LuB1}}
The function $\beta_{\mathcal{B}}$ and $\beta_{\mathcal{F}}$ defined in (\ref{beta}) satisfy the following properties.
\begin{enumerate}
\item $\beta_{\mathcal{B}}$ is strictly decreasing on $[0,\infty)$ and its range is $(0,\beta(0)]$.
\item$\beta_{\mathcal{F}}$ is strictly decreasing on $(-\ln3,\infty)$ and its range is $(0,\beta(-\ln 3))$.
\end{enumerate}
\end{lemma}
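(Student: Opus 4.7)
The plan is to reduce monotonicity of $\beta_{\mathcal{K}}$ to a Cauchy--Schwarz inequality via an integration by parts identity, and then to read off the range from the asymptotics at the endpoints. Abbreviate
$$I_0(c)=\int_{\mathbb{R}^3}\frac{dp}{e^{|p|^2+c}\pm 1},\qquad I_2(c)=\int_{\mathbb{R}^3}\frac{|p|^2\,dp}{e^{|p|^2+c}\pm 1},$$
so that $\beta_{\mathcal{K}}(c)=I_0(c)\,I_2(c)^{-3/5}$. Passing to spherical coordinates and integrating the radial integrand $r^2/(e^{r^2+c}\pm 1)$ by parts against the antiderivative $r^3/3$, with care taken at $r=0$ in the bosonic case $c=0$ where $1/(e^{r^2}-1)\sim 1/r^2$, yields the key identity
$$I_0(c)=-\tfrac{2}{3}\,I_2'(c).$$
An analogous integration by parts gives $I_2(c)=\tfrac{2}{5}\int_{\mathbb{R}^3}|p|^4\,g_c(p)\,dp$, where $g_c(p):=e^{|p|^2+c}/(e^{|p|^2+c}\pm 1)^2>0$. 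Direct differentiation under the integral also yields $-I_0'(c)=\int_{\mathbb{R}^3} g_c\,dp$ and $-I_2'(c)=\int_{\mathbb{R}^3} |p|^2 g_c\,dp$.

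Next, I would compute
$$\beta_{\mathcal{K}}'(c)=I_2(c)^{-8/5}\Big[I_0'(c)\,I_2(c)-\tfrac{3}{5}\,I_0(c)\,I_2'(c)\Big].$$
After substituting the identities above and simplifying, the condition $\beta_{\mathcal{K}}'(c)<0$ reduces to
$$\Big(\int_{\mathbb{R}^3} g_c\,dp\Big)\Big(\int_{\mathbb{R}^3}|p|^4 g_c\,dp\Big)>\Big(\int_{\mathbb{R}^3}|p|^2 g_c\,dp\Big)^2,$$
which is the Cauchy--Schwarz inequality applied to the pair $\sqrt{g_c}$ and $|p|^2\sqrt{g_c}$. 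Strictness holds because $g_c>0$ almost everywhere and $|p|^2$ is not constant on its support, so $\beta_{\mathcal{K}}$ is strictly decreasing on the stated domain.

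For the range, $\beta_{\mathcal{K}}$ is continuous, and as $c\to\infty$ one checks $I_0(c),I_2(c)\sim C\,e^{-c}$ by dominated convergence, so $\beta_{\mathcal{K}}(c)\sim C'\,e^{-2c/5}\to 0$. The left-endpoint values $\beta_{\mathcal{B}}(0)$ and $\beta_{\mathcal{F}}(-\ln 3)$ are finite positive numbers by direct convergence of the integrals there, so strict monotonicity together with the intermediate value theorem yields the stated ranges $(0,\beta_{\mathcal{B}}(0)]$ (closed on the right since $0$ is in the bosonic domain) and $(0,\beta_{\mathcal{F}}(-\ln 3))$ (open since $-\ln 3$ is excluded). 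The main obstacle is the careful justification of the boundary vanishing in the IBP identity $I_0=-\tfrac{2}{3}I_2'$ for bosons at $c=0$; the singularity of the integrand at $p=0$ forces a limiting argument on $\{|p|\ge\varepsilon\}$ followed by $\varepsilon\downarrow 0$, together with the observation that $r^3/(e^{r^2}-1)=O(r)$ as $r\downarrow 0$ so the inner boundary term vanishes.
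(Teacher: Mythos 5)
The paper itself does not prove this lemma: the stated proof is only a citation to \cite{LuB1} for the bosonic case and to \cite{BaeY} for the fermionic case, so your proposal supplies a self-contained argument where the paper gives none. The mechanics are correct. Writing everything radially, integration by parts gives $I_0=-\tfrac{2}{3}I_2'$ and $I_2=\tfrac{2}{5}\int_{\mathbb{R}^3}|p|^4 g_c\,dp$, and direct differentiation gives $-I_0'=\int g_c\,dp$, $-I_2'=\int|p|^2 g_c\,dp$. Substituting into $\beta_{\mathcal K}'=I_2^{-8/5}\bigl(I_0'I_2-\tfrac{3}{5}I_0I_2'\bigr)$ reduces $\beta_{\mathcal K}'<0$ to the strict Cauchy--Schwarz inequality $\bigl(\int|p|^2 g_c\,dp\bigr)^2<\bigl(\int g_c\,dp\bigr)\bigl(\int|p|^4 g_c\,dp\bigr)$, and the range statements follow from continuity, the $e^{-2c/5}$ decay at $c\to+\infty$, and finiteness at the left endpoints. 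This is a clean, elementary route.

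Two points deserve more care. First, for bosons at $c=0$ the weight $g_0$ behaves like $|p|^{-4}$ near the origin, so $\int g_0\,dp=+\infty$ and $I_0'(0^+)$ is not finite; the computation therefore rigorously gives $\beta_{\mathcal B}'(c)<0$ only for $c>0$, and strict decrease on the closed interval $[0,\infty)$ must be obtained by combining strict decrease on $(0,\infty)$ with continuity of $\beta_{\mathcal B}$ at $c=0^+$. Second, and more striking: for fermions the weight $g_c$ is smooth, strictly positive, and rapidly decaying for \emph{every} $c\in\mathbb{R}$, so your Cauchy--Schwarz argument in fact gives $\beta_{\mathcal F}'(c)<0$ on all of $\mathbb{R}$, whereas Remark 2.2(2) of the paper explicitly states that monotonicity of $\beta_{\mathcal F}$ on the whole line is not yet known and is left as future work. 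I do not see a flaw in your derivation, but you should flag this tension rather than silently restricting to $(-\ln 3,\infty)$: either your elementary argument settles a question the authors regarded as open, or there is a subtlety neither of us has spotted, and either outcome merits an explicit remark.
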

\begin{proof}
Proof for (1) can be founded in \cite{LuB1}, and the proof for (2) can be founded in \cite{BaeY}. 
\end{proof}
\begin{lemma}\label{determination}
Assume $f\in\Lambda$. Then $a$ and $c$ are uniquely determined from (\ref{a,c1}) and (\ref{a,c2}), and $\mathcal{K}(f)$ is well-defined.
Moreover, $\mathcal{K}(f)$ is not condensated (Bosonic case) nor saturated (Fermionic case).
That is, no transition from $\mathcal{B}_1(f)$ to $\mathcal{B}_2(f)$, or $\mathcal{F}_1(f)$ to $\mathcal{F}_2(f)$ occurs.
\end{lemma}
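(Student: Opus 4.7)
The plan is to reduce everything to a single scalar estimate for the quantity
\[
R(x) := \frac{N(x)}{\bigl(E(x)-|P(x)|^2/N(x)\bigr)^{3/5}},
\]
and then invoke the monotonicity stated in Lemma \ref{monotone}. Because $f\in\Lambda$ gives $N(x)\geq a_l>0$ and $N(x)E(x)-|P(x)|^2\geq k>0$, the denominator is well defined and strictly positive, so I can legitimately rewrite
\[
R(x)=\frac{N(x)^{8/5}}{\bigl(N(x)E(x)-|P(x)|^2\bigr)^{3/5}}.
\]
This form is what makes the membership conditions $(\mathcal{B})$ and $(\mathcal{C})$ immediately useful.

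Next, I would apply the two bounds directly: $(\mathcal{B})$ gives $N(x)\leq a_u$ and $(\mathcal{C})$ gives $N(x)E(x)-|P(x)|^2\geq k$, so
\[
R(x)\leq \frac{a_u^{8/5}}{k^{3/5}}.
\]
By the standing assumption \eqref{inibeta}, the right-hand side is strictly less than $\beta_{\mathcal{B}}(0)$ in the bosonic case and strictly less than $\beta_{\mathcal{F}}(-\ln 3)$ in the fermionic case. Therefore $R(x)$ falls strictly inside the interval $\bigl(0,\beta_{\mathcal{B}}(0)\bigr)$ (resp.\ $\bigl(0,\beta_{\mathcal{F}}(-\ln 3)\bigr)$), which by Lemma \ref{monotone} is exactly the range on which $\beta_{\mathcal{K}}$ is a strictly monotone (hence bijective) function of $c$ on $[0,\infty)$ (resp.\ $(-\ln 3,\infty)$). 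Consequently, the relation \eqref{by} admits a unique solution $c(x)$ in this range, and then $a(x)$ is determined uniquely from \eqref{a,c2}. This shows $\mathcal{K}(f)$ is well defined.

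Finally, to rule out the condensated/saturated branches, I only need to read off the defining criteria in \eqref{BoseEin} and \eqref{FD}: the non-condensed formula $\mathcal{B}_1$ is selected precisely when $R(x)\leq \beta_{\mathcal{B}}(0)$, and the non-saturated formula $\mathcal{F}_1$ is selected when $R(x)<\beta_{\mathcal{F}}(-\infty)$; since $\beta_{\mathcal{F}}(-\ln 3)\leq \beta_{\mathcal{F}}(-\infty)$ (extending by monotonicity), both conditions are verified by the estimate above. Hence no branch transition occurs, and $\mathcal{K}(f)=\mathcal{B}_1(f)$ (resp.\ $\mathcal{F}_1(f)$) throughout.

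The only nontrivial point is really the scalar inequality $R(x)\leq a_u^{8/5}/k^{3/5}$; everything else is a bookkeeping exercise in quoting Lemma \ref{monotone} and interpreting \eqref{inibeta}. The possible subtle obstacle is the fermionic case, because global strict monotonicity of $\beta_{\mathcal{F}}$ is not known (cf.\ Remark \ref{rmk}(2)); this is precisely why \eqref{inibeta} is phrased with $\beta_{\mathcal{F}}(-\ln 3)$ rather than $\beta_{\mathcal{F}}(-\infty)$, and why the restriction of $\beta_{\mathcal{F}}^{-1}$ to $(-\ln 3,\infty)$ in the notation section is essential for the uniqueness step.
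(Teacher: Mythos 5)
Your proof is correct and follows essentially the same route as the paper: both rewrite $N/(E-|P|^2/N)^{3/5}$ as $N^{8/5}/(NE-|P|^2)^{3/5}$, bound it above by $a_u^{8/5}/k^{3/5}$ via conditions $(\mathcal{B})$ and $(\mathcal{C})$ of $\Lambda$, and then combine assumption \eqref{inibeta} with the monotonicity in Lemma \ref{monotone} to pin down a unique $c$ (hence $a$) and rule out condensation/saturation. Your explicit flag that the fermionic branch test in \eqref{FD} tacitly requires $\beta_{\mathcal{F}}(-\ln 3)\le\beta_{\mathcal{F}}(-\infty)$ is a genuine subtlety, but it is one the paper itself passes over implicitly, as acknowledged in Remark \ref{rmk}.
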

\begin{proof}
$\bullet$~(Boson): We note from  \eqref{inibeta}$_1$, \eqref{upper lower} and \eqref{lower k} that
\begin{align*}
0<\frac{N}{\left(E-\frac{|P|^2}{N}\right)^{\frac{3}{5}}} =
\frac{N^{\frac{8}{5}}}{(EN-|P|^2)^{\frac{3}{5}}} \leq \frac{{a_u}^{\frac{8}{5}}}{k^{\frac{3}{5}}}<\beta_{\mathcal{B}}(0).
\end{align*}
Therefore, in view of Lemma \ref{monotone}, the interval $\big(0, N^{\frac{8}{5}}/(EN-|P|^2)^{\frac{3}{5}}\big]$ lies in the range of $\beta_{\mathcal{B}}$, and we can fix a unique $c$ satisfying (\ref{by}) by the monotonicity of $\beta_{\mathcal{B}}$ obtained in Lemma \ref{monotone}, which in turn leads to the determination of $a$ by (\ref{a,c2}).
Note also from (\ref{BoseEin}) that this guarantees that the condensation does not arise if $f\in\Lambda$.
In conclusion, $\mathcal{B}(f)$ is well-defined for $f\in \Lambda$.\\
\noindent$\bullet$~(Fermion): Similarly, combining second condition of \eqref{inibeta}$_2$ with \eqref{upper lower} and \eqref{lower k} yields
\begin{align*}
\frac{N}{\left(E-\frac{|P|^2}{N}\right)^{\frac{3}{5}}} =\frac{N^{\frac{8}{5}}}{(EN-|P|^2)^{\frac{3}{5}}} \leq \frac{{a_u}^{\frac{8}{5}}}{k^{\frac{3}{5}}}<\beta_{\mathcal{F}}(-\ln 3),
\end{align*}
for fermion case. Therefore, by the exactly same argument, we can conclude that $a$ and $c$ are uniquely determined for $f\in\Lambda$, and the transition from the non-saturated state $\mathcal{F}_1(f)$ to the saturated state $\mathcal{F}_2(f)$ does not happen.
\end{proof}
In view of this consideration, we can uniquely determine $c$ satisfying \eqref{a,c1}. For brevity, we slightly abuse the notation to denote as
\begin{align}\label{beta inv}
\beta_{\mathcal{B}}^{-1}=\big(\beta_{\mathcal{B}}\big|_{[0,\infty)}\big)^{-1}, \quad \beta_{\mathcal{F}}^{-1}=\big(\beta_{\mathcal{F}}\big|_{(-\ln3,\infty)}\big)^{-1},
\end{align}
and $\beta_{\mathcal{K}}^{-1}$ will denote
\[
\beta_{\mathcal{K}}^{-1}=\beta_{\mathcal{B}}^{-1} ~~(\mbox{Boson}) \quad \mbox{and}\quad \beta_{\mathcal{K}}^{-1}=\beta_{\mathcal{F}}^{-1}~~(\mbox{Fermion}).
\]
We first consider the range of $a$ and $c$ when they are constructed from an element of $\Lambda$.

\begin{lemma}\label{bddac} Let $f\in\Lambda$, and the boundary data $f_{LR}$ satisfy \eqref{inibeta}.
Define $a_*$, $a^*$, $c_*$, $c^*$ by
\[
c_* = \beta_{\mathcal{K}}^{-1}\left(\frac{a_u^{\frac{8}{5}}}{k^{\frac{3}{5}}}\right),\quad  c^*=\beta_{\mathcal{K}}^{-1}\left(\frac{a_l^{\frac{8}{5}}}{\left(a_uc_u\right)^{\frac{3}{5}}}\right),
\]
and
\begin{align*}
a_*=\left({\int_{\mathbb{R}^3}\frac{1}{e^{|p|^2+c^*}\pm1}dp}\right)^\frac{2}{3}a_u^{-\frac{2}{3}}, \quad a^*=\left({\int_{\mathbb{R}^3}\frac{1}{e^{|p|^2+c_*}\pm1}dp}\right)^\frac{2}{3}a_l^{-\frac{2}{3}}.
\end{align*}
Then, the equilibrium parameter $a$ and $c$ satisfy
\[
 0\leq a_* \leq a \leq a^*.
\]
and
\begin{align*}
 -\ln3 \leq c_* \leq c \leq c^* \quad (\mbox{Fermion})\quad\mbox{and} \quad 0 \leq c_* \leq c \leq c^*\quad (\mbox{Boson}).
\end{align*}
In the case of fermion, we note that $-\ln3  \leq c_*$.
\end{lemma}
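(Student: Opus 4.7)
The plan is to reduce both bounds to the defining relations \eqref{a,c1}, \eqref{a,c2} and then invoke the strict monotonicity of $\beta_{\mathcal K}$ from Lemma \ref{monotone}. First I would rewrite the left hand side of \eqref{by} in the algebraically cleaner form
\begin{equation*}
\beta_{\mathcal K}(c)\;=\;\frac{N(x)}{\left(E(x)-|P(x)|^2/N(x)\right)^{3/5}}\;=\;\frac{N(x)^{8/5}}{\bigl(E(x)N(x)-|P(x)|^2\bigr)^{3/5}},
\end{equation*}
so that the three constraints packaged in the definition of $\Lambda$ (namely $a_l\leq N\leq a_u$, $c_l\leq E\leq c_u$, and $EN-|P|^2\geq k$) can be applied directly to numerator and denominator separately.

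Next I would derive the two sided bound on this ratio. The upper bound uses $N\leq a_u$ in the numerator and $EN-|P|^2\geq k$ in the denominator to give $N^{8/5}/(EN-|P|^2)^{3/5}\leq a_u^{8/5}/k^{3/5}$. For the lower bound, I use $N\geq a_l$ in the numerator; for the denominator I simply drop the nonpositive term $-|P|^2$ and use $EN\leq a_u c_u$, obtaining $N^{8/5}/(EN-|P|^2)^{3/5}\geq a_l^{8/5}/(a_uc_u)^{3/5}$. By the hypothesis \eqref{inibeta} together with the range statements in Lemma \ref{monotone}, the upper quantity $a_u^{8/5}/k^{3/5}$ lies strictly inside the range of $\beta_{\mathcal K}$, and the lower quantity $a_l^{8/5}/(a_uc_u)^{3/5}$ is a fortiori in that range. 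Strict monotone decrease of $\beta_{\mathcal K}$ then lets me apply $\beta_{\mathcal K}^{-1}$ (reversing inequalities) to obtain $c_*\leq c\leq c^*$. In the fermion case the same monotonicity of $\beta_{\mathcal F}^{-1}$ on $(0,\beta(-\ln3))$ and the assumption $a_u^{8/5}/k^{3/5}<\beta_{\mathcal F}(-\ln3)$ yield $c_*>-\ln3$, which is the admissibility condition needed to stay in the non-saturated regime.

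For the bounds on $a$, I would plug the bounds on $c$ into formula \eqref{a,c2}. The key observation is that the integrand $1/(e^{|p|^2+c}\pm 1)$ is strictly decreasing in $c$ (for both signs, whenever defined), so $\int_{\mathbb R^3}(e^{|p|^2+c}\pm1)^{-1}dp$ is a decreasing function of $c$. Combining this with $c_*\leq c\leq c^*$ and $a_l\leq N\leq a_u$ gives
\begin{equation*}
\Bigl(\int_{\mathbb R^3}\tfrac{dp}{e^{|p|^2+c^*}\pm1}\Bigr)^{2/3}a_u^{-2/3}\;\leq\; a\;\leq\;\Bigl(\int_{\mathbb R^3}\tfrac{dp}{e^{|p|^2+c_*}\pm1}\Bigr)^{2/3}a_l^{-2/3},
\end{equation*}
which is exactly $a_*\leq a\leq a^*$. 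Non-negativity $a_*\geq0$ is clear, and in fact $a_*>0$ since $c^*<\infty$ makes the integrand strictly positive on a set of positive measure.

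The main subtlety, and really the only nontrivial point, is checking that the upper ratio $a_u^{8/5}/k^{3/5}$ actually falls inside the range of $\beta_{\mathcal K}$ — in the bosonic case the range $(0,\beta_{\mathcal B}(0)]$ closes at the top so strictness of \eqref{inibeta} is essential, and in the fermionic case both the endpoint $-\ln3$ and the open range $(0,\beta_{\mathcal F}(-\ln3))$ force strict inequality to guarantee both $c_*>-\ln3$ and that $\beta_{\mathcal F}^{-1}$ is defined there. Everything else is a direct application of the monotonicity of $\beta_{\mathcal K}$ and of the integrand in $c$.
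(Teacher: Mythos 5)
Your proposal is correct and follows essentially the same route as the paper: rewrite $\beta_{\mathcal K}(c)$ as $N^{8/5}/(EN-|P|^2)^{3/5}$, sandwich it between $a_l^{8/5}/(a_uc_u)^{3/5}$ and $a_u^{8/5}/k^{3/5}$ via the membership conditions of $\Lambda$, apply the monotone inverse $\beta_{\mathcal K}^{-1}$ to bound $c$, and then feed the $c$-bounds and $a_l\leq N\leq a_u$ into \eqref{a,c2} to bound $a$. The paper is terser (e.g.\ it does not spell out dropping $-|P|^2$ or the monotonicity of the integrand in $c$), but the argument is the same.
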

\begin{proof}
(1) {\bf Estimates for $c$}:
From \eqref{a,c1} and \eqref{beta}, we have
\begin{align*}
\beta_{\mathcal{K}}(c) =\frac{N}{\left(E-\frac{|P|^2}{N}\right)^{3/5}}= \frac{N^{\frac{8}{5}}}{\left(EN-|P|^2\right)^{\frac{3}{5}}}.
\end{align*}
Since $f\in\Lambda$ we have $a_l\leq N\leq a_u$ , $E\leq c_u$ and $EN-|P|^2 \geq k$, so that
\begin{align*}
\frac{a_l^{\frac{8}{5}}}{\left(a_uc_u\right)^{\frac{3}{5}}} \leq \beta_{\mathcal{K}}(c) \leq  \frac{a_u^{\frac{8}{5}}}{k^{\frac{3}{5}}}.
\end{align*}
Now, since Lemma \ref{monotone} implies that $\beta_{\mathcal{K}}^{-1}$ is strictly decreasing, and
the closed interval $\big[{a_l^{{8}/{5}}}/{\left(a_uc_u\right)^{{3}/{5}}},{a_u^{{8}/{5}}}/{k^{{3}/{5}}}\big]$ lies in the range of $\beta_{\mathcal{K}}(c)$, we have
\begin{align*}
0 \leq \beta_{\mathcal{K}}^{-1}\left(\frac{a_u^{\frac{8}{5}}}{k^{\frac{3}{5}}}\right)  \leq c \leq  \beta_{\mathcal{K}}^{-1}\left(\frac{a_l^{\frac{8}{5}}}{\left(a_uc_u\right)^{\frac{3}{5}}}\right),
\end{align*}
to get the desired estimates for $c$. \\
(2) {\bf Estimates for $a$}: We recall \eqref{a,c2}. Then from $a_l\leq N\leq a_u$ and estimates of $c$ established above, we find
\begin{align*}
\left({\int_{\mathbb{R}^3}\frac{1}{e^{|p|^2+c^*}\pm1}dp}\right)^\frac{2}{3}a_u^{-\frac{2}{3}} \leq a\leq \left({\int_{\mathbb{R}^3}\frac{1}{e^{|p|^2+c_*}\pm1}dp}\right)^\frac{2}{3}a_l^{-\frac{2}{3}}.
\end{align*}
For boson case, $c_*\geq 0 $ implies the positivity of $a_*$. For fermion case, positivity of $a_*$ is trivial. This completes the proof.
\end{proof}
\subsection{Solution operator}
By Lemma \ref{determination}, the following solution operator $\Phi$ is well-defined on $\Lambda$:
\begin{definition}
We defind our solution operator $\Phi$ as
\begin{align*}
\Phi(f)=\Phi^+(f)1_{p_1>0}+\Phi^-(f)1_{p_1<0},
\end{align*}
where
\begin{align}\label{Phif+}
\begin{split}
\Phi^+(f)(x,p)&=e^{-\frac{1}{\tau|p_1|}\int_0^xN_f(y)dy}f_L(p)	\cr
&+\frac{1}{\tau|p_1|}\int_0^xe^{-\frac{1}{\tau|p_1|}\int_y^xN_f(z)dz}N_f(y)\mathcal{K}(f)dy \quad \qquad \textit{if} \quad p_1>0,
\end{split}
\end{align}
and
\begin{align}\label{Phif-}
\begin{split}
\Phi^-(f)(x,p)&=e^{-\frac{1}{\tau|p_1|}\int_x^1N_f(y)dy}f_R(p)	\cr
&+\frac{1}{\tau|p_1|}\int_x^1e^{-\frac{1}{\tau|p_1|}\int_x^yN_f(z)dz}N_f(y)\mathcal{K}(f)dy \quad \qquad \textit{if} \quad p_1<0.
\end{split}
\end{align}
\end{definition}

In the remaining sections, we show that $\Phi$ has a unique fixed point in $\Lambda$ if $\tau$ is sufficiently large. We first prove several estimates on
the quantum local equilibrium.
\begin{lemma}\label{Bf}
Let $f\in \Lambda$, then there exists a constant $C_{l,u}$ depending only on the quantities in (\ref{iniconst}) and $k$ such that
\[\mathcal{K}(f)(1+|p|^2) \leq C_{l,u}e^{-\frac{a_*}{4}|p|^2}.\]
\end{lemma}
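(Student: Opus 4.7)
The plan is to bound $\mathcal{K}(f)$ pointwise by a shifted Gaussian $e^{-a_\ast |p - P/N|^2 - c_\ast}$, then transfer the shift in momentum to the origin using Young's inequality, and finally absorb the polynomial weight $1+|p|^2$ into a slight relaxation of the Gaussian rate (from $a_\ast/2$ down to $a_\ast/4$).

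\textbf{Step 1 (pointwise exponential envelope for $\mathcal{K}(f)$).} For the fermionic case this is immediate from $1/(e^y+1)\le e^{-y}$ with $y=a|p-P/N|^2+c$. For the bosonic case the denominator $e^y-1$ can be small near $p=P/N$, and here I need the strict positivity $c\ge c_\ast>0$. This is where assumption \eqref{inibeta} enters: Lemma \ref{monotone} together with Lemma \ref{bddac} gives $c_\ast=\beta_{\mathcal B}^{-1}\!\bigl(a_u^{8/5}/k^{3/5}\bigr)>0$ precisely because $a_u^{8/5}/k^{3/5}<\beta_{\mathcal B}(0)$. Once $c\ge c_\ast>0$, I write $e^y-1=e^y(1-e^{-y})\ge(1-e^{-c_\ast})e^y$ to obtain
\[
\mathcal{K}(f)\le \frac{1}{1-e^{-c_\ast}}\,e^{-a|p-P/N|^2-c}.
\]
Combined with the bounds $a\ge a_\ast>0$ and $c\ge c_\ast$ from Lemma \ref{bddac}, this yields $\mathcal{K}(f)\le C_{l,u}\,e^{-a_\ast|p-P/N|^2}$ in both the bosonic and fermionic cases.

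\textbf{Step 2 (control of the drift $P/N$).} To replace $|p-P/N|^2$ by $|p|^2$ I need a uniform bound on $|P/N|^2$. Cauchy--Schwarz applied to $P=\int f\,p\,dp$ gives $|P|^2\le NE$, so $|P/N|^2\le E/N\le c_u/a_l$ thanks to condition $(\mathcal B)$ of the solution space $\Lambda$. Then Young's inequality $2|p||P/N|\le \tfrac12 |p|^2+2|P/N|^2$ delivers
\[
|p-P/N|^2\ \ge\ \tfrac12 |p|^2-|P/N|^2\ \ge\ \tfrac12 |p|^2-\tfrac{c_u}{a_l},
\]
whence $e^{-a_\ast|p-P/N|^2}\le e^{a_\ast c_u/a_l}\,e^{-\frac{a_\ast}{2}|p|^2}$. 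The prefactor depends only on quantities in \eqref{iniconst}.

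\textbf{Step 3 (absorbing the polynomial weight).} Splitting $e^{-\frac{a_\ast}{2}|p|^2}=e^{-\frac{a_\ast}{4}|p|^2}\cdot e^{-\frac{a_\ast}{4}|p|^2}$ and using the elementary bound
\[
(1+|p|^2)\,e^{-\frac{a_\ast}{4}|p|^2}\le \sup_{r\ge 0}(1+r^2)e^{-\frac{a_\ast}{4} r^2}=C(a_\ast),
\]
combine the estimates of Steps 1--2 to conclude $\mathcal{K}(f)(1+|p|^2)\le C_{l,u}\,e^{-\frac{a_\ast}{4}|p|^2}$, as desired.

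\textbf{Main obstacle.} Essentially everything in Steps 2--3 is routine. The only delicate point is the bosonic Step 1: I must rule out $c=0$ so that $\mathcal{B}(f)$ has no singularity at $p=P/N$. This is exactly the role of \eqref{inibeta}: it forces $\beta_{\mathcal B}(c)$ to stay strictly below $\beta_{\mathcal B}(0)$, and by strict monotonicity of $\beta_{\mathcal B}$ (Lemma \ref{monotone}) this produces a strictly positive lower bound $c_\ast$. Without this positive lower bound the claimed pointwise Gaussian upper envelope simply fails near the macroscopic drift, which is why the condensation-threshold assumption is not merely convenient but necessary for this estimate.
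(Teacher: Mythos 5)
Your proposal is correct and rests on the same three ingredients as the paper's proof: the uniform bounds $a\ge a_*>0$, $c\ge c_*$ from Lemma~\ref{bddac} (with $c_*>0$ for bosons, which is exactly where \eqref{inibeta} is used), the shift estimate $|p-P/N|^2\ge \tfrac12|p|^2-|P/N|^2$, and a uniform bound on the macroscopic drift $|P/N|$. Where you diverge is in the organization of Step~1: the paper keeps the polynomial weight attached from the start, factors the bosonic denominator as $e^{y}-1=(e^{y/2}-1)(e^{y/2}+1)$ with $y=a_*|p-P/N|^2+c_*$, absorbs $|p|^2$ into the first factor via the boundedness of $r\mapsto (r^2+1)/(e^{a_*r^2/2+c_*/2}-1)$, and turns the second factor into the Gaussian decay. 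You instead first establish the clean pointwise envelope $\mathcal{K}(f)\le C_{l,u}e^{-a_*|p-P/N|^2}$ (using $e^y-1\ge(1-e^{-c_*})e^y$ for $y\ge c_*$ in the bosonic case, and $1/(e^y+1)\le e^{-y}$ for fermions), shift the center, and only then absorb $1+|p|^2$ at the end by sacrificing half of the Gaussian rate. Your route is a bit more modular and makes the role of $c_*>0$ more transparent, at the cost of one extra splitting of the exponential; it also neatly covers both the fermionic and bosonic cases in a single framework, whereas the paper writes out only the bosonic case. Either argument is complete and delivers the same constant structure $C_{l,u}e^{-\frac{a_*}{4}|p|^2}$.
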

\begin{proof}
We only consider $\mathcal{B}(f)|p|^2$. By an explicit computation, we have
\begin{align*}
\mathcal{B}(f)|p|^2 &= \frac{|p|^2}{e^{a\big|p-\frac{P}{N}\big|^2+c}-1}\cr
&\leq  \frac{|p|^2}{\left(e^{\frac{a_*}{2}\big|p-\frac{P}{N}\big|^2+\frac{c_*}{2}}-1\right)\left(e^{\frac{a_*}{2}\big|p-\frac{P}{N}\big|^2+\frac{c_*}{2}}+1\right)}	\cr
&\leq  \frac{2\big|p-\frac{P}{N}\big|^2+2\big|\frac{P}{N}\big|^2}{e^{\frac{a_*}{2}\big|p-\frac{P}{N}\big|^2+\frac{c_*}{2}}-1}\frac{1}{e^{\frac{a_*}{2}\big|p-\frac{P}{N}\big|^2+\frac{c_*}{2}}+1}.
\end{align*}
In last line, we used $a^2\leq 2|a-b|^2+2b^2$. Then, we observe
\begin{align}\label{PN}
\bigg|\frac{P}{N}\bigg| \leq \frac{a_u+c_u}{a_l},
\end{align}
which follows from $|P| \leq a_u+c_u$, and use the boundedness of $\frac{x^2+1}{e^{ax^2+c}-1}$ to get
\begin{align*}
\mathcal{B}(f)|p|^2&\leq  \frac{C_{a_*,c_*}}{e^{\frac{a_*}{2}\big|p-\frac{P}{N}\big|^2+\frac{c_*}{2}}+1}.	
\end{align*}
Now, since $|a-b|^2\geq a^2/2-b^2$, we have
\begin{align*}
\mathcal{B}(f)|p|^2&\leq  \frac{C_{l,u}}{e^{\frac{a_*}{2}\left(\frac{|p|^2}{2}-\big|\frac{P}{N}\big|^2\right)+\frac{c_*}{2}}+1}
=\frac{C_{l,u}}{e^{-\frac{a_*}{2}\big|\frac{P}{N}\big|^2}e^{\frac{a_*|p|^2}{4}+\frac{c_*}{2}}+1},
\end{align*}
We then use (\ref{PN}) again to get the desired result:
\begin{align*}
\mathcal{B}(f)|p|^2&\leq  C_{l,u} e^{\frac{a_*}{2}\big|\frac{P}{N}\big|^2}e^{-\frac{a_*|p|^2}{4}}e^{-\frac{c_*}{2}}	
\leq C_{l,u}e^{-\frac{a_*}{4}|p|^2}.
\end{align*}
\end{proof}
The following decay estimates are crucially used throughout the paper. The proof can be found in \cite{BY}. We provide detailed proof for reader's convenience.
\begin{lemma}\label{same form} We have
\begin{align*}
\int_0^x\int_0^{\infty}\frac{1}{\tau|p_1|}e^{-\frac{a_l(x-y)}{\tau|p_1|}}e^{-C_{l,u}p_1^2}dp_1dy \leq C_{l,u}\left(\frac{\ln\tau+1}{\tau}\right).
\end{align*}
\end{lemma}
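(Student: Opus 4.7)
The plan is to evaluate the inner $y$-integration in closed form, yielding a one-dimensional $p_1$-integral that can then be controlled by splitting at an appropriate scale. Using the substitution $s = a_l(x-y)/(\tau p_1)$, the $y$-integral becomes
\[
\int_0^x \frac{1}{\tau p_1} e^{-\frac{a_l(x-y)}{\tau p_1}}\,dy = \frac{1}{a_l}\Bigl(1-e^{-\frac{a_l x}{\tau p_1}}\Bigr),
\]
so the quantity on the left-hand side of the lemma reduces to $\frac{1}{a_l}\int_0^\infty \bigl(1-e^{-a_l x/(\tau p_1)}\bigr) e^{-C_{l,u}p_1^2}\,dp_1$.

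Next I would exploit the elementary inequality $1-e^{-s} \le \min(1,s)$ and split the outer integral at the natural threshold $p_1 = a_l x/\tau$. On the short interval $(0, a_l x/\tau)$ I use the crude bounds $1-e^{-s} \le 1$ and $e^{-C_{l,u}p_1^2} \le 1$, giving a contribution of at most the length of the interval, $a_l x/\tau \le a_l/\tau$. On the remaining interval $(a_l x/\tau,\infty)$ I use $1-e^{-s}\le s$, which reduces the piece to
\[
\frac{a_l x}{\tau}\int_{a_l x/\tau}^{\infty}\frac{1}{p_1}\,e^{-C_{l,u}p_1^2}\,dp_1.
\]

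The main technical point is to show that this last integral is bounded by $C_{l,u}(\ln\tau+1)$, from which the claimed bound follows after multiplication by $x\le 1$. I would split it once more at $p_1 = 1$: the tail $\int_1^\infty p_1^{-1}e^{-C_{l,u}p_1^2}\,dp_1$ is manifestly a finite absolute constant, while the near-origin piece evaluates to $\int_{a_l x/\tau}^1 p_1^{-1}\,dp_1 = \ln\tau - \ln(a_l x)$, which supplies exactly the $\ln\tau$ factor (note that $\tau$ is large, so $a_l x/\tau<1$).

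The only delicate step is handling the residual $-\ln(a_l x)$ term so that the final bound is genuinely independent of $x$. I would absorb it using the fact that the map $x \mapsto x\ln(1/(a_l x))$ is bounded on $(0,1]$, so after multiplication by the prefactor $a_l x/\tau$ this term contributes at most $C_{l,u}/\tau$. Collecting the three pieces yields the desired estimate $C_{l,u}(\ln\tau+1)/\tau$. Apart from this bookkeeping, the argument is entirely elementary.
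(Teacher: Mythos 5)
Your argument is correct and reaches the stated bound, but by a genuinely different route than the paper. The paper carries out the $y$-integration and then splits the $p_1$-line into three regions $(0,1/\tau)$, $(1/\tau,\tau)$, $(\tau,\infty)$; the logarithm is extracted from the middle region by expanding $e^{-a_l x/(\tau p_1)}$ in a Taylor series and integrating term by term, which produces $\tfrac{1}{\tau}\ln\tau^2$ plus a convergent series bounded by $e^{a_l}/(a_l\tau)$. You instead use the single pointwise inequality $1-e^{-s}\le\min(1,s)$ and split at the data-dependent scale $p_1=a_l x/\tau$ (and then once more at $p_1=1$), recovering the $\ln\tau$ directly from $\int_{a_l x/\tau}^1 p_1^{-1}\,dp_1$. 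Your route avoids the series manipulation entirely and is arguably cleaner; the paper's is slightly more mechanical. The one point you are right to flag — the residual $-\ln(a_l x)$ that could blow up as $x\to0$ — is handled correctly via the boundedness of $x\mapsto x\ln(1/(a_l x))$ on $(0,1]$, which is exactly what makes the final bound uniform in $x$. Two small bookkeeping slips worth noting, neither of which affects correctness: you dropped the overall prefactor $1/a_l$ when reporting the short-interval contribution as ``$a_l x/\tau$'' (it should be $x/\tau$), and likewise the prefactor in front of the second piece should be $x/\tau$ rather than $a_l x/\tau$; both discrepancies are absorbed into $C_{l,u}$, so the conclusion stands. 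It is also worth stating explicitly that the split at $p_1=1$ presupposes $a_l x/\tau<1$, which holds since the lemma is invoked only for $\tau$ large (and the $x=0$ case is trivial).
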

\begin{proof}
We divide the integral domain of $p_1$ into three parts: 
\begin{align*}
A&=\left\{\int_0^x\int_{0<p_1<\frac{1}{\tau}}+\int_0^x\int_{\frac{1}{\tau}<p_1<\tau}+\int_0^x\int_{p_1>\tau}\right\}\frac{1}{\tau|p_1|}e^{-\frac{a_l(x-y)}{\tau|p_1|}}e^{-C_{l,u}p_1^2}dp_1dy	\cr
&\equiv  I+II+III.
\end{align*}
Integrating in $y$ first, we get
\begin{align*}
I&=\int_{0<p_1<\frac{1}{\tau}}\int_0^x\frac{1}{\tau|p_1|}e^{-\frac{a_l(x-y)}{\tau|p_1|}}e^{-C_{l,u}p_1^2}dydp_1 \cr
&= \frac{1}{a_l} \int_{0<p_1<\frac{1}{\tau}}\left\{1-e^{-\frac{a_lx}{\tau|p_1|}}\right\}e^{-C_{l,u}p_1^2}dp_1	\cr
&\leq \frac{1}{a_l}\frac{1}{\tau}.
\end{align*}
We start similarly for $II$:
\begin{align*}
II&=\int_{\frac{1}{\tau}<p_1<\tau}\int_0^x\frac{1}{\tau|p_1|}e^{-\frac{a_l(x-y)}{\tau|p_1|}}e^{-C_{l,u}p_1^2}dydp_1	\cr
&\leq \frac{1}{a_l} \int_{\frac{1}{\tau}<p_1<\tau}\left\{1-e^{-\frac{a_lx}{\tau|p_1|}}\right\}e^{-C_{l,u}p_1^2}dp_1.
\end{align*}
Then we expand $e^{-\frac{a_lx}{\tau|p_1|}}$ in  Taylor expansion to obtain
\begin{align*}
\begin{split}
II&\leq \frac{1}{a_l} \int_{\frac{1}{\tau}<p_1<\tau}\left\{\left(\frac{a_l}{\tau|p_1|}\right)-\frac{1}{2!}\left(\frac{a_l}{\tau|p_1|}\right)^2+\frac{1}{3!}\left(\frac{a_l}{\tau|p_1|}\right)^3+\cdots \right\}dp_1	\cr
&\leq \frac{1}{a_l} \bigg|\int_{\frac{1}{\tau}}^{\tau}\left(\frac{a_l}{\tau p_1}\right)dp_1\bigg| +\frac{1}{a_l}\bigg|\int_{\frac{1}{\tau}}^{\tau}\frac{1}{2!}\left(\frac{a_l}{\tau p_1}\right)^2dp_1\bigg|+\frac{1}{a_l}\bigg|\int_{\frac{1}{\tau}}^{\tau}\frac{1}{3!}\left(\frac{a_l}{\tau p_1}\right)^3dp_1\bigg|+\cdots.
\end{split}
\end{align*}
Then, since
\begin{align*}
\int_{\frac{1}{\tau}}^{\tau}\left(\frac{1}{p_1}\right)^ndp= \left[\frac{1}{n-1}\frac{-1}{p_1^{n-1}}\right]_{\frac{1}{\tau}}^{\tau} = \frac{1}{n-1}\frac{r^{2n-2}-1}{r^{n-1}}.
\end{align*}
We can bound $II$ by
\begin{align*}
&\frac{1}{\tau}\ln\tau^2+\frac{1}{2!}\frac{a_l}{\tau^2}\frac{\tau^2-1}{\tau}+\frac{1}{2\cdot3!}\frac{a_l^2}{\tau^3}\frac{\tau^4-1}{\tau^2}+\frac{1}{3\cdot4!}\frac{a_l^3}{\tau^4}\frac{\tau^6-1}{\tau^3}+\cdots	\cr
&\qquad\leq \frac{1}{\tau}\ln\tau^2+\frac{1}{2!}\frac{a_l}{\tau}+\frac{1}{3!}\frac{a_l^2}{\tau}+\frac{1}{4!}\frac{a_l^3}{\tau}+\cdots	\cr
&\qquad=\frac{1}{\tau}\ln\tau^2+\frac{e^{a_l}}{a_l}\frac{1}{\tau},
\end{align*}
where we used $(\tau^n-1)/\tau^n\leq 1$ in second line. Finally, by using $e^{-\frac{a_l(x-y)}{\tau|p_1|}}<1$, we estimate $III$ as
\begin{align*}
III&=\int_{p_1>\tau}\left\{\int_0^x\frac{1}{\tau|p_1|}e^{-\frac{a_l(x-y)}{\tau|p_1|}}dy\right\}e^{-C_{l,u}p_1^2}dp_1	
\leq \frac{1}{\tau^2}\int_{\mathbb{R}}e^{-C_{l,u}p_1^2}dp_1	\leq C_{l,u}\frac{1}{\tau^2}.
\end{align*}
Combining the above estimates gives the desired results for sufficiently large $\tau$:
\begin{align*}
I+II+III \leq C_{l,u}\left\{\frac{1}{\tau}+\frac{1}{\tau}\ln\tau^2+\frac{1}{\tau^2}\right\} \leq C_{l,u}\left(\frac{\ln\tau+1}{\tau}\right).
\end{align*}
\end{proof}
%
%
%
%
%
\section{$\Phi$ maps $\Lambda$ into $\Lambda$}
The main result of this section is stated in the following proposition.
\begin{proposition}
Let $f_{LR}$ satisfies the assumptions in Theorem \ref{mainthm}. Then, there exists $\tau_0$ such that if $\tau>\tau_0$, then the solution operator $\Phi$ maps $\Lambda$ into $\Lambda$.
\end{proposition}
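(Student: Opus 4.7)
The plan is to verify each of the three defining conditions $(\mathcal{A})$, $(\mathcal{B})$, $(\mathcal{C})$ of $\Lambda$ for $\Phi(f)$, given $f \in \Lambda$. Non-negativity $(\mathcal{A})$ is immediate from the explicit formulas (\ref{Phif+})--(\ref{Phif-}): all exponential factors are positive, the boundary data $f_L, f_R$ are non-negative by hypothesis, and Lemma \ref{determination} ensures that $\mathcal{K}(f)$ is well-defined and non-negative for $f \in \Lambda$. For the lower mass and energy bounds in $(\mathcal{B})$, I would discard the non-negative relaxation integral in each of $\Phi^{\pm}(f)$ and use $N_f \leq a_u$ inside the surviving exponential weight to obtain the pointwise bound $\Phi^{\pm}(f)(x,p) \geq e^{-a_u/(\tau|p_1|)} f_{L/R}(p)$, uniformly in $x \in [0,1]$. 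Integrating this against $1$ and against $|p|^2$ over the respective half-spaces $\{p_1>0\}$ and $\{p_1<0\}$ and summing the two contributions recovers exactly $a_l$ and $c_l$ from their definitions in (\ref{iniconst}); no largeness of $\tau$ is needed at this stage.

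The upper bounds in $(\mathcal{B})$ are the step that forces $\tau$ to be taken large. I would bound the boundary terms crudely by $e^{-\cdots} \leq 1$, so that the boundary contributions to $\int \Phi(f)\,dp$ and $\int \Phi(f)|p|^2\,dp$ amount to $\int f_{LR}\,dp = a_u/2$ and $\int f_{LR}|p|^2\,dp = c_u/2$, respectively. For the relaxation integral, I would use $N_f(y) \leq a_u$ in the numerator, $N_f(z) \geq a_l$ in the inner exponential, and Lemma \ref{Bf} to estimate $\mathcal{K}(f)(1+|p|^2) \leq C_{l,u}e^{-(a_*/4)|p|^2}$. Factoring the Gaussian in $(p_2,p_3)$ out of the $p_1$-integral, the residual one-dimensional integral has exactly the form treated by Lemma \ref{same form}, so the relaxation contribution is at most $C_{l,u}(\ln\tau+1)/\tau$ in both moments. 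Choosing $\tau$ large enough that this correction is smaller than $a_u/2$ and $c_u/2$ yields $N_{\Phi(f)}(x) \leq a_u$ and $E_{\Phi(f)}(x) \leq c_u$.

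The condition $(\mathcal{C})$ is handled by the identity
\[
N_g E_g - |P_g|^2 = \frac{1}{2}\iint g(p)\,g(q)\,|p-q|^2\,dp\,dq,
\]
valid for any non-negative $g$, applied to $g(p) = \Phi(f)(x,p)$. Splitting $g = g^+ + g^-$ according to the sign of $p_1$, the two same-sign integrals $\iint g^\pm(p) g^\pm(q) |p-q|^2$ are non-negative and can be discarded, leaving $N_{\Phi(f)}E_{\Phi(f)} - |P_{\Phi(f)}|^2 \geq \iint g^+(p)\,g^-(q)\,|p-q|^2\,dp\,dq$. Using the pointwise lower bound $g^\pm \geq e^{-a_u/(\tau|p_1|)} f_{L/R}$ from the first paragraph together with the elementary inequality $(p_1-q_1)^2 \geq 4|p_1||q_1|$, which holds on the cross support $\{p_1>0,\,q_1<0\}$, the double integral factorizes into the product that defines $k$ in (\ref{k}), giving the bound $\geq 4k \geq k$. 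I expect the principal obstacle to be the upper bound step in the second paragraph: the boundary contribution already consumes $a_u/2$ of the available budget $a_u$, so the relaxation remainder must be made genuinely small, which is exactly the role of the rarefaction hypothesis $\tau \gg 1$ via Lemmas \ref{Bf} and \ref{same form}; the lower bounds in $(\mathcal{B})$ and the estimate for $(\mathcal{C})$ are in contrast robust in $\tau$.
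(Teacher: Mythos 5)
Your verification of $(\mathcal{A})$ and $(\mathcal{B})$ coincides with the paper's Lemmas \ref{positive}, \ref{posi}, \ref{upperbdd}: drop the relaxation integral and pull $e^{-a_u/(\tau|p_1|)}$ out for the lower bounds, bound the boundary exponential by $1$ and combine Lemma \ref{Bf} with Lemma \ref{same form} for the upper bounds. That part of the proposal is the same route.

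Your treatment of $(\mathcal{C})$, however, is a genuinely different and in fact tighter argument. The paper proceeds via Cauchy--Schwarz to reduce to
$\left(\int|p_1|\Phi\right)^2 - \left|\int p_1\Phi\right|^2$ plus an error term $R$ involving the transverse moments $M_2 = \int p_2\Phi$, $M_3 = \int p_3\Phi$; it then needs a separate decay estimate (Lemma \ref{LemP}), which itself invokes assumption (3) of Theorem \ref{mainthm} to kill the boundary contribution to $M_2,M_3$, and it only concludes $N_{\Phi}E_{\Phi}-|P_{\Phi}|^2 \geq 4k - C_{l,u}(\ln\tau+1)/\tau \geq k$ for $\tau$ large. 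You instead use the exact double-integral representation
\begin{align*}
N_g E_g - |P_g|^2 = \tfrac{1}{2}\iint g(p)\,g(q)\,|p-q|^2\,dp\,dq,
\end{align*}
throw away the same-sign blocks $g^{\pm}\!\otimes g^{\pm}$, keep only the cross block $g^{+}\!\otimes g^{-}$ (which carries a combinatorial factor $2$ that cancels the $\tfrac12$), bound $|p-q|^2 \geq (p_1-q_1)^2 \geq 4|p_1||q_1|$ on $\{p_1>0,\,q_1<0\}$, and insert the pointwise lower bound $\Phi^{\pm}\geq e^{-a_u/(\tau|p_1|)}f_{L/R}$ to land exactly on $4k$. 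This is more elementary, makes no use of boundary assumption (3), needs no companion lemma on $M_2,M_3$, and is uniform in $\tau$ rather than requiring $\tau$ large. The only place largeness of $\tau$ is truly forced is, as you correctly identify, the upper bounds in $(\mathcal{B})$. So the proposal is not only correct but sharpens the structure of the paper's argument for condition $(\mathcal{C})$.
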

\begin{proof}
The proof is given in the following Lemma \ref{positive},  \ref{posi},  \ref{upperbdd} and \ref{lowerk}.
\end{proof}
\begin{lemma}\label{positive} Let $f\in \Lambda$. Assume $f_{LR}$ satisfies all the assumptions of the Theorem \ref{mainthm}. Then $\Phi(f)$ satisfies the following estimates:
\begin{align*}
\Phi(f)(x,p) \geq 0.
\end{align*}
\end{lemma}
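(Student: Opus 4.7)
The plan is to inspect the representation formulas \eqref{Phif+} and \eqref{Phif-} directly and argue that every factor appearing in them is non-negative. The exponential prefactors are obviously positive, and the boundary data $f_L$, $f_R$ are non-negative by assumption (1) of Theorem \ref{mainthm}, so both first terms in $\Phi^\pm(f)$ are non-negative without effort. The only nontrivial point is the integrand of the second term, which has the form $e^{-(\cdot)}\,N_f(y)\,\mathcal{K}(f)$; since $f\in\Lambda$ satisfies condition $(\mathcal{B})$, we have $N_f(y)\ge a_l>0$, so it suffices to verify that $\mathcal{K}(f)\ge 0$ pointwise.

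For the fermionic case this is immediate: $\mathcal{F}(f)=\bigl(e^{a|p-P/N|^2+c}+1\bigr)^{-1}$ has denominator bounded below by $1$, hence is strictly positive regardless of the sign of $c$. For the bosonic case one needs $e^{a|p-P/N|^2+c}-1>0$, i.e.\ $a|p-P/N|^2+c>0$. This is where I would invoke Lemma \ref{bddac}, which guarantees that for any $f\in\Lambda$ the equilibrium parameters extracted from \eqref{a,c1}--\eqref{a,c2} satisfy $a\ge a_*>0$ and $c\ge c_*\ge 0$; consequently $a|p-P/N|^2+c\ge 0$, with strict inequality for $p\ne P/N$, so $\mathcal{B}(f)$ is non-negative everywhere it is defined. (Equivalently one can cite Lemma \ref{determination}: since the non-condensed representative $\mathcal{B}_1(f)$ is admissible, by \eqref{BoseEin} we are automatically in the range $c\ge 0$.)

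Combining these observations, the integrand $e^{-\frac{1}{\tau|p_1|}\int_y^x N_f(z)\,dz}\,N_f(y)\,\mathcal{K}(f)$ is non-negative, so its integral against $dy/(\tau|p_1|)$ is non-negative, and adding the manifestly non-negative boundary contribution yields $\Phi^+(f)\ge 0$ for $p_1>0$. The argument for $\Phi^-(f)$ when $p_1<0$ is identical, with $f_R$ playing the role of $f_L$ and the integral running from $x$ to $1$. Hence $\Phi(f)(x,p)\ge 0$ on $[0,1]\times\mathbb{R}^3$.

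There is essentially no analytic obstacle here; the only place where some care is required is the bosonic positivity of $\mathcal{B}(f)$, which is precisely what Lemma \ref{bddac} (and the underlying assumption \eqref{inibeta}) was designed to ensure. In that sense this lemma is really a corollary of the set-up established in Section 3 rather than a step whose difficulty lies in the computation itself.
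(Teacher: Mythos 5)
Your proof is correct and follows essentially the same route as the paper: both reduce the claim to the non-negativity of the two terms in \eqref{Phif+}--\eqref{Phif-}, with the only substantive point being $\mathcal{K}(f)>0$, which both you and the paper obtain from the parameter bounds of Lemma \ref{bddac}. One small tightening: for the bosonic case one actually needs $c\ge c_*>0$ \emph{strictly} (so that the denominator $e^{a|p-P/N|^2+c}-1$ is positive even at $p=P/N$), which does hold here because the strict inequality in \eqref{inibeta} together with the strict monotonicity of $\beta_{\mathcal{B}}$ forces $c_*=\beta_{\mathcal{B}}^{-1}(a_u^{8/5}/k^{3/5})>0$; your phrase ``$c_*\ge 0$ ... non-negative everywhere it is defined'' glosses over this, but the conclusion is unaffected.
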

\begin{proof}
Thanks to Lemma \ref{bddac}, the local equilibrium is strictly positive:
\begin{align*}
\mathcal{K}(f) &= \frac{1}{e^{a\big|p-\frac{P}{N}\big|^2+c}\pm1}
\geq \frac{1}{e^{a^*\big|p-\frac{P}{N}\big|^2+c^*}\pm1}	
>\frac{1}{e^{c^*}\pm1}
> 0.
\end{align*}
Therefore, we have from (\ref{Phif+}) and (\ref{Phif-}) that
\begin{align}\label{lowerbddphi}
\begin{split}
\Phi^+(f)(x,p)&\geq e^{-\frac{1}{\tau|p_1|}\int_0^xN_f(y)dy}f_L(p)\geq 0, \qquad \textit{if} \quad p_1>0, \cr
\Phi^-(f)(x,p)&\geq e^{-\frac{1}{\tau|p_1|}\int_x^1N_f(y)dy}f_R(p)\geq 0, \qquad \textit{if} \quad p_1<0,
\end{split}
\end{align}
which gives desired result.
\end{proof}
\begin{lemma}\label{posi} Let $f\in \Lambda$. Assme $f_{LR}$ satisfies all the assumptions of the Theorem \ref{mainthm}, then $\Phi(f)$ also satisfies the following inequality.
\begin{align*}
\int_{\mathbb{R}^3}\Phi(f)dp \geq a_l,	\qquad  \int_{\mathbb{R}^3}\Phi(f)|p|^2dp \geq c_l.
\end{align*}
\end{lemma}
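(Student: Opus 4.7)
The plan is to simply exploit the pointwise lower bound on $\Phi(f)$ already obtained in (\ref{lowerbddphi}) in the proof of Lemma \ref{positive}, combined with the upper bound $N_f\leq a_u$ provided by the definition of $\Lambda$. The resulting lower bounds will match the definitions of $a_l$ and $c_l$ in (\ref{iniconst}) essentially by construction; no new analytic input is required.

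Concretely, since $f\in\Lambda$ gives $N_f(y)\leq a_u$ for all $y\in[0,1]$, the exponential weights in (\ref{lowerbddphi}) satisfy
\[
e^{-\frac{1}{\tau|p_1|}\int_0^x N_f(y)\,dy}\geq e^{-\frac{a_u}{\tau|p_1|}},\qquad e^{-\frac{1}{\tau|p_1|}\int_x^1 N_f(y)\,dy}\geq e^{-\frac{a_u}{\tau|p_1|}},
\]
and therefore, after splitting the momentum integral into the half spaces $\{p_1>0\}$ and $\{p_1<0\}$ and applying (\ref{lowerbddphi}),
\[
\int_{\mathbb{R}^3}\Phi(f)(x,p)\,dp\geq \int_{p_1>0}e^{-\frac{a_u}{\tau|p_1|}}f_L(p)\,dp+\int_{p_1<0}e^{-\frac{a_u}{\tau|p_1|}}f_R(p)\,dp.
\]
By the definition (\ref{LR}) of $f_{LR}$, the right-hand side is exactly $\int_{\mathbb{R}^3}e^{-a_u/(\tau|p_1|)}f_{LR}(p)\,dp=a_l$.

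The energy lower bound is identical: multiplying (\ref{lowerbddphi}) by $|p|^2$ and integrating gives
\[
\int_{\mathbb{R}^3}\Phi(f)(x,p)|p|^2\,dp\geq \int_{\mathbb{R}^3}e^{-\frac{a_u}{\tau|p_1|}}f_{LR}(p)|p|^2\,dp=c_l,
\]
again by (\ref{iniconst}). There is no serious obstacle here — the only step worth noting is that one must discard the non-negative gain term (the integral involving $\mathcal{K}(f)$) in (\ref{Phif+}) and (\ref{Phif-}), keeping only the transported boundary part, so that the cumulative exponential weight can be bounded by the constant $e^{-a_u/(\tau|p_1|)}$ independently of $x$. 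This is legitimate because both $N_f$ and $\mathcal{K}(f)$ are non-negative by Lemma \ref{positive} and the positivity assumption on $f\in\Lambda$.
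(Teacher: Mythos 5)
Your proof is correct and follows exactly the same route as the paper: drop the nonnegative gain term to get the pointwise bound \eqref{lowerbddphi}, bound the cumulative exponential weight from below by $e^{-a_u/(\tau|p_1|)}$ using $N_f\leq a_u$ and $x\in[0,1]$, and then integrate against $dp$ (resp.\ $|p|^2\,dp$) to recognize the definitions of $a_l$ and $c_l$ in \eqref{iniconst}. There is no substantive difference from the paper's argument.
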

\begin{proof}
We only consider the second one. We see from \eqref{lowerbddphi} that
\begin{align}\label{philower}
\begin{split}
\Phi(f)\geq e^{-\frac{1}{\tau|p_1|}\int_0^xN_f(y)dy}f_L(p)1_{p_1>0}+e^{-\frac{1}{\tau|p_1|}\int_x^1N_f(y)dy}f_R(p)1_{p_1<0}.
\end{split}
\end{align}
Using $N_f \leq a_u$, we see that
\begin{align*}
\Phi(f) &\geq e^{-\frac{a_ux}{\tau|p_1|}}f_L(p)1_{p_1>0}+e^{-\frac{(1-x)a_u}{\tau|p_1|}}f_R(p)1_{p_1<0}	\cr
&\geq e^{-\frac{a_u}{\tau|p_1|}}f_L(p)1_{p_1>0}+e^{-\frac{a_u}{\tau|p_1|}}f_R(p)1_{p_1<0}	\cr
&= e^{-\frac{a_u}{\tau|p_1|}}f_{LR}.
\end{align*}
We then integrate with respect to $|p|^2dp$ to get the desired results:
\begin{align*}
\int_{\mathbb{R}^3}\Phi(f)|p|^2dp \geq	\int_{\mathbb{R}^3}e^{-\frac{a_u}{\tau|p_1|}}f_{LR}|p|^2dp = c_l.
\end{align*}
\end{proof}

\begin{lemma}\label{upperbdd} Let $f\in \Lambda$. Assume $f_{LR}$ satisfies all the assumptions of the Theorem \ref{mainthm}. Then $\Phi(f)$ satisfies the following estimates:
\begin{align*}
\int_{\mathbb{R}^3}\Phi(f)dp \leq a_u,	\qquad \int_{\mathbb{R}^3}\Phi(f)|p|^2dp \leq c_u,
\end{align*}
for sufficiently large $\tau$.
\end{lemma}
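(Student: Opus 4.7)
The plan is to split $\int_{\mathbb{R}^3}\Phi(f)\,dp$ (and similarly the weighted version) into a boundary contribution coming from the first terms of \eqref{Phif+}, \eqref{Phif-}, and a ``collision'' contribution coming from the second terms. I will show that the boundary part is already bounded by $a_u/2$ (resp.\ $c_u/2$) thanks to the trivial bound $e^{-\frac{1}{\tau|p_1|}\int N_f} \le 1$, while the collision part becomes negligible as $\tau\to\infty$ via Lemma~\ref{same form}. This leaves headroom of $a_u/2$ (resp.\ $c_u/2$) which absorbs the small collision term for $\tau$ large.

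For the boundary piece: since the exponential factor is at most $1$,
\begin{align*}
\int_{p_1>0}e^{-\frac{1}{\tau|p_1|}\int_0^xN_f\,dy}f_L\,dp+\int_{p_1<0}e^{-\frac{1}{\tau|p_1|}\int_x^1N_f\,dy}f_R\,dp\le \int_{\mathbb{R}^3}f_{LR}\,dp=\frac{a_u}{2},
\end{align*}
and the same bound with weight $|p|^2$ yields $c_u/2$. These use nothing more than the definition \eqref{iniconst} of $a_u,c_u$.

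For the collision piece, I combine three ingredients. First, since $f\in\Lambda$, we have $a_l\le N_f\le a_u$, so the prefactor $N_f(y)\le a_u$ and the exponent in \eqref{Phif+}, \eqref{Phif-} satisfies $\int_y^x N_f\,dz\ge a_l(x-y)$ (and similarly for $\Phi^-$). Second, Lemma~\ref{Bf} provides the uniform Gaussian bound $\mathcal{K}(f)(y,p)(1+|p|^2)\le C_{l,u}e^{-\frac{a_*}{4}|p|^2}$, independent of $y$. Therefore the collision contribution to $\int_{\mathbb{R}^3}\Phi(f)(1+|p|^2)\,dp$ is dominated by
\begin{align*}
C_{l,u}\int_{\mathbb{R}^2}e^{-\frac{a_*}{4}(p_2^2+p_3^2)}\,dp_2dp_3\int_0^\infty\!\!\int_0^x\frac{1}{\tau p_1}e^{-\frac{a_l(x-y)}{\tau p_1}}e^{-\frac{a_*}{4}p_1^2}\,dy\,dp_1+(\mbox{symmetric }\Phi^-\mbox{ term}),
\end{align*}
and Lemma~\ref{same form} bounds each $p_1$--$y$ integral by $C_{l,u}(\ln\tau+1)/\tau$. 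Hence the entire collision piece is at most $C_{l,u}(\ln\tau+1)/\tau$.

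Putting the two estimates together gives, for both the zeroth and second $p$-moments,
\begin{align*}
\int_{\mathbb{R}^3}\Phi(f)(1+|p|^2)^j\,dp\le \frac{a_u\text{ or }c_u}{2}+C_{l,u}\,\frac{\ln\tau+1}{\tau},\qquad j=0,1,
\end{align*}
which is $\le a_u$ (resp.\ $c_u$) once $\tau$ exceeds a threshold $\tau_0$ depending only on the constants in \eqref{iniconst} and $k$. The step I expect to be technically most delicate is verifying the uniform-in-$y$ character of the Gaussian bound on $\mathcal{K}(f)$, since $a,c,P/N$ depend on $y$ through $f$; this is exactly what Lemmas~\ref{bddac} and~\ref{Bf} are designed for, so the proof should reduce to a careful bookkeeping of constants rather than any genuinely new estimate.
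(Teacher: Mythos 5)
Your proposal is correct and follows essentially the same approach as the paper: bound the boundary term by $\int f_{LR}(1,|p|^2)\,dp = (a_u,c_u)/2$ using $e^{-\cdot}\le 1$, control the collision term via the Gaussian bound of Lemma~\ref{Bf} combined with Lemma~\ref{same form} to get $C_{l,u}(\ln\tau+1)/\tau$, and absorb the latter into the remaining $(a_u,c_u)/2$ headroom for $\tau$ large.
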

\begin{proof}
We only consider the second inequality. We integrate \eqref{Phif+} with respect to $|p|^2dp$  to get
\begin{align}\label{upperesti}
\begin{split}
\int_{\mathbb{R}^3}\Phi^+(f)|p|^2dp&=\int_{p_1>0}e^{-\frac{1}{\tau|p_1|}\int_0^xN_f(y)dy}f_L(p)|p|^2dp	\cr
&+\int_{p_1>0}\frac{1}{\tau|p_1|}\int_0^xe^{-\frac{1}{\tau|p_1|}\int_y^xN_f(z)dz}N_f(y)\mathcal{K}(f)|p|^2dydp.
\end{split}
\end{align}
Since $N_f\geq a_l$ and $x\geq0$, we can estimate first term as
\begin{align*}
\int_{p_1>0}e^{-\frac{1}{\tau|p_1|}\int_0^xN_f(y)dy}f_L(p)|p|^2dp \leq
\int_{p_1>0}e^{-\frac{xa_l}{\tau|p_1|}}f_L(p)|p|^2dp
\leq \int_{p_1>0}f_L(p)|p|^2dp.
\end{align*}
We then recall Lemma \ref{Bf} and use $a_l\leq N_f \leq a_u$ to bound the second term as
\begin{align*}
&\int_{p_1>0}\frac{1}{\tau|p_1|}\int_0^xe^{-\frac{1}{\tau|p_1|}\int_y^xN_f(z)dz}N_f(y)\mathcal{K}(f)|p|^2dydp	\cr
&\quad \leq  C_{l,u }\int_0^x\int_0^{\infty}\frac{1}{\tau|p_1|}e^{-\frac{1}{\tau|p_1|}\int_y^xN_f(z)dz}N_f(y)e^{-C_{l,u}p_1^2}dydp_1\int_{\mathbb{R}}e^{-C_{l,u}\{p_2^2+p^2_3\}}dp_2dp_3.\cr
&\quad \leq  C_{l,u }a_u\int_0^x\int_0^{\infty}\frac{1}{\tau|p_1|}e^{-\frac{a_l(x-y)}{\tau|p_1|}}e^{-C_{l,u}p_1^2}dydp_1.
\end{align*}
Therefore, we have from Lemma \ref{same form} that
\begin{align*}
\int_{p_1>0}\Phi^+(f)|p|^2dp& \leq \int_{p_1>0}f_L(p)|p|^2dp	+C_{l,u}\left(\frac{\ln\tau+1}{\tau}\right).
\end{align*}
Similarly, we can derive
\begin{align*}
\int_{p_1<0}\Phi^-(f)|p|^2dp& \leq \int_{p_1<0}f_R(p)|p|^2dp	+C_{l,u}\left(\frac{\ln\tau+1}{\tau}\right),
\end{align*}
so that
\begin{align*}
\int_{\mathbb{R}^3}\Phi(f)|p|^2dp& \leq \frac{1}{2}c_u +C_{l,u}\left(\frac{\ln\tau+1}{\tau}\right).
\end{align*}
which gives the desired result for sufficiently large $\tau$.
\end{proof}
\begin{lemma}\label{LemP} Let $f\in \Lambda$. Assume $f_{LR}$ satisfies all the assumptions of the Theorem \ref{mainthm}. Then, for sufficiently large $\tau$, we have
\begin{align*}
\bigg|\int_{\mathbb{R}^3}\Phi(f)p_idp\bigg| \leq C_{l,u}\left(\frac{\ln \tau+1}{\tau}\right),
\end{align*}
for $i=2,3$.
\end{lemma}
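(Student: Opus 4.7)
The plan is to split $\int_{\mathbb{R}^3}\Phi(f)p_i\,dp$ into a boundary contribution and a local equilibrium contribution, and show that the boundary contribution vanishes identically by the assumption (3) of Theorem \ref{mainthm}, while the equilibrium contribution is controlled by the decay estimate of Lemma \ref{same form} together with the pointwise bound of Lemma \ref{Bf}.

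First I would write, using \eqref{Phif+}--\eqref{Phif-},
\begin{align*}
\int_{\mathbb{R}^3}\Phi(f)p_i\,dp &= \int_{p_1>0}e^{-\frac{1}{\tau|p_1|}\int_0^x N_f(y)dy}f_L(p)p_i\,dp + \int_{p_1<0}e^{-\frac{1}{\tau|p_1|}\int_x^1 N_f(y)dy}f_R(p)p_i\,dp\\
&\quad + \int_{p_1>0}\frac{1}{\tau|p_1|}\int_0^x e^{-\frac{1}{\tau|p_1|}\int_y^x N_f(z)dz}N_f(y)\mathcal{K}(f)p_i\,dy\,dp\\
&\quad + \int_{p_1<0}\frac{1}{\tau|p_1|}\int_x^1 e^{-\frac{1}{\tau|p_1|}\int_x^y N_f(z)dz}N_f(y)\mathcal{K}(f)p_i\,dy\,dp.
\end{align*}
The two boundary terms vanish: the exponential weights depend only on $p_1$, so integrating first in $(p_2,p_3)$ pulls out $\int_{\mathbb{R}^2}f_L p_i\,dp_2 dp_3=0$ (and likewise for $f_R$) by assumption (3) of Theorem \ref{mainthm}. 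That reduces the estimate to the two equilibrium terms.

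For the equilibrium terms, I would use Lemma \ref{Bf} in the form $\mathcal{K}(f)|p_i|\le \mathcal{K}(f)(1+|p|^2)\le C_{l,u}e^{-\frac{a_*}{4}|p|^2}$, which is separable as $e^{-\frac{a_*}{4}p_1^2}e^{-\frac{a_*}{4}(p_2^2+p_3^2)}$. Together with $N_f(y)\le a_u$ in the prefactor and $N_f(z)\ge a_l$ inside the exponent, I obtain, for the $p_1>0$ half,
\begin{align*}
\bigg|\int_{p_1>0}\frac{1}{\tau|p_1|}\int_0^x e^{-\frac{1}{\tau|p_1|}\int_y^x N_f(z)dz}N_f(y)\mathcal{K}(f)p_i\,dy\,dp\bigg|\\
\le C_{l,u}\bigg(\int_0^x\!\!\int_0^\infty\frac{1}{\tau|p_1|}e^{-\frac{a_l(x-y)}{\tau|p_1|}}e^{-\frac{a_*}{4}p_1^2}dp_1\,dy\bigg)\!\bigg(\int_{\mathbb{R}^2}e^{-\frac{a_*}{4}(p_2^2+p_3^2)}dp_2dp_3\bigg).
\end{align*}
The $(p_2,p_3)$ integral is a finite constant, and the $(p_1,y)$ integral is precisely of the form handled by Lemma \ref{same form}, giving a bound of $C_{l,u}(\ln\tau+1)/\tau$. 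The $p_1<0$ half is estimated identically by the same lemma after the change of variables $y\mapsto 1-y$. Summing the two contributions yields the claim.

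Since each of the three ingredients (vanishing of the boundary term by assumption (3), the Gaussian-type pointwise control of $\mathcal{K}(f)|p_i|$ by Lemma \ref{Bf}, and the $(\ln\tau+1)/\tau$ decay from Lemma \ref{same form}) is already in place, there is no genuine obstacle; the proof is a direct repackaging of the proof of Lemma \ref{upperbdd} with the extra observation that assumption (3) kills the boundary part. The only point requiring minor care is to verify that the trivial bound $|p_i|\le 1+|p|^2$ does suffice to apply Lemma \ref{Bf}, which is immediate.
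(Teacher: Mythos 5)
Your proposal is correct and follows essentially the same route as the paper: the boundary terms vanish by assumption (3) because the exponential factor depends only on $p_1$, the remaining equilibrium contribution is pointwise-bounded via Lemma \ref{Bf} using $|p_i|\le 1+|p|^2$, and the resulting $(p_1,y)$ integral is controlled by Lemma \ref{same form}. No further comment is needed.
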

\begin{proof}
We only consider the case $i=2$. For this, we integrate (\ref{Phif+}) with respect to $p_2dp_2dp_3$:
\begin{align*}
\begin{split}
\int_{\mathbb{R}^2}\Phi^+(f)(x,p)p_2dp_2dp_3&=\int_{\mathbb{R}^2}e^{-\frac{1}{\tau|p_1|}\int_0^xN_f(y)dy}f_L(p)p_2dp_2dp_3	\cr
&+\int_{\mathbb{R}^2}\frac{1}{\tau|p_1|}\int_0^xe^{-\frac{1}{\tau|p_1|}\int_y^xN_f(z)dz}N_f(y)\mathcal{K}(f)p_2dydp_2dp_3.
\end{split}
\end{align*}
We note that the first term in r.h.s vanishes due to the assumption (3) of Theorem \ref{mainthm}:
\begin{align*}
\int_{\mathbb{R}^2}e^{-\frac{1}{\tau|p_1|}\int_0^xN_f(y)dy}f_L(p)p_2dp_2dp_3 =e^{-\frac{1}{\tau|p_1|}\int_0^xN_f(y)dy}\int_{\mathbb{R}^2}f_L(p)p_2dp_2dp_3=0.
\end{align*}
For the second term, we use $a_l \leq N_f\leq a_u$ and employ Lemma \ref{Bf} to derive
\begin{align*}
\bigg|\int_{\mathbb{R}^2}\Phi^+(f)(x,p)p_2dp_2dp_3\bigg|&\leq C_{l,u}\frac{1}{\tau|p_1|}\int_0^xe^{-\frac{1}{\tau|p_1|}\int_y^xN_f(z)dz}N_f(y)e^{-C_{l,u}|p_1|^2}dy	\cr
&\leq  C_{l,u}\frac{a_u}{\tau|p_1|}\int_0^xe^{-\frac{a_l(x-y)}{\tau|p_1|}}e^{-C_{l,u}|p_1|^2}dy.
\end{align*}
Now we integrate with respect to $ dp_1$ on $p_1>0$  to obtain
\begin{align*}	
\bigg|\int_{p_1>0}\Phi^+(f)(x,p)p_2dp\bigg| &\leq \int_{p_1>0}\bigg|\int_{\mathbb{R}^2}\Phi^+(f)(x,p)p_2dp_2dp_3\bigg|dp_1\cr
&\leq  C_{l,u}\int_{p_1>0}\frac{1}{\tau|p_1|}\int_0^xe^{-\frac{a_l(x-y)}{\tau|p_1|}}e^{-C_{l,u}|p_1|^2}dydp_1.
\end{align*}
Therefore, we have from Lemma \ref{same form} that
\begin{align*}	
\bigg|\int_{p_1>0}\Phi^+(f)(x,p)p_2dp\bigg| &\leq  C_{l,u}\left(\frac{\ln\tau+1}{\tau}\right).
\end{align*}
Similarly, we have
\begin{align*}
\bigg|\int_{p_1<0}\Phi^-(f)(x,p)p_2dp\bigg| &\leq  C_{l,u}\left(\frac{\ln\tau+1}{\tau}\right),
\end{align*}
which gives the desired result.
\end{proof}

\begin{lemma}\label{lowerk} Let $f\in \Lambda$. Assume $f_{LR}$ satisfies all the assumptions of the Theorem \ref{mainthm}. Then, for sufficiently large $\tau$, we have
\begin{align*}
\left(\int_{\mathbb{R}^3}\Phi(f)dp\right)\left(\int_{\mathbb{R}^3}\Phi(f)|p|^2dp\right)-\bigg|\int_{\mathbb{R}^3}\Phi(f)pdp\bigg|^2 \geq k.
\end{align*}
\end{lemma}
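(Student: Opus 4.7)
The plan is to convert $N_\Phi E_\Phi - |P_\Phi|^2$ into a manifestly non-negative Lagrange-type double integral, substitute the pointwise lower bound $\Phi(f) \geq g_L + g_R$ already produced in the proof of Lemma \ref{posi}, and exploit the disjointness (in $p_1$) of the supports of $g_L$ and $g_R$ to recover exactly the product structure of the definition \eqref{k} of $k$.

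More concretely, I would first expand $|p-q|^2 = |p|^2 - 2 p\cdot q + |q|^2$ and symmetrise in $(p,q)$ to obtain, for any non-negative $h$, the identity
\begin{equation*}
\left(\int_{\mathbb{R}^3} h\, dp\right)\left(\int_{\mathbb{R}^3} h |p|^2\, dp\right) - \left|\int_{\mathbb{R}^3} h p\, dp\right|^2 = \frac{1}{2}\int_{\mathbb{R}^3}\int_{\mathbb{R}^3} h(p) h(q) |p-q|^2\, dp\, dq.
\end{equation*}
Applied to $h = \Phi(f)(x,\cdot)$, this rewrites the left-hand side of the lemma as a manifestly non-negative double integral, reducing the problem to producing the quantitative floor $k$.

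I would then insert the pointwise lower bound
\begin{equation*}
\Phi(f)(x,p) \geq g_L(p) + g_R(p), \qquad g_L(p) := e^{-\frac{a_u}{\tau|p_1|}} f_L(p) 1_{p_1>0}, \qquad g_R(p) := e^{-\frac{a_u}{\tau|p_1|}} f_R(p) 1_{p_1<0},
\end{equation*}
which is precisely the intermediate inequality appearing in the proof of Lemma \ref{posi} (before one integrates against $|p|^2 dp$). Expanding the product $(g_L + g_R)(p) \cdot (g_L + g_R)(q)$, dropping the non-negative diagonal pieces $g_L(p) g_L(q)$ and $g_R(p) g_R(q)$, and combining the two cross terms via the symmetry $(p,q) \leftrightarrow (q,p)$, yields
\begin{equation*}
N_\Phi E_\Phi - |P_\Phi|^2 \geq \int_{\mathbb{R}^3}\int_{\mathbb{R}^3} g_L(p) g_R(q) |p-q|^2\, dp\, dq.
\end{equation*}

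To finish, I would bound $|p-q|^2 \geq (p_1 - q_1)^2$ and note that on the joint support one has $p_1 > 0$ and $q_1 < 0$, so $-p_1 q_1 = |p_1||q_1|$ and therefore $(p_1 - q_1)^2 = p_1^2 + q_1^2 + 2|p_1||q_1| \geq 2|p_1||q_1|$. The resulting integral factorises as
\begin{equation*}
\int_{\mathbb{R}^3}\int_{\mathbb{R}^3} g_L(p) g_R(q) (p_1 - q_1)^2\, dp\, dq \geq 2\left(\int_{p_1>0} e^{-\frac{a_u}{\tau|p_1|}} f_L(p) |p_1|\, dp\right)\left(\int_{q_1<0} e^{-\frac{a_u}{\tau|q_1|}} f_R(q) |q_1|\, dq\right) = 2k,
\end{equation*}
which already exceeds $k$ by the very definition \eqref{k}; notably, no smallness of $\tau^{-1}$ is used at this step. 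The only (mild) subtlety is the choice of Lagrange identity: a direct Cauchy--Schwarz in $L^2(\Phi(f)\,dp)$ delivers only non-negativity of $N_\Phi E_\Phi - |P_\Phi|^2$, whereas the symmetric double-integral form makes the disjoint-support structure of $f_{LR}$ visible and converts it into the quantitative bound $k$.
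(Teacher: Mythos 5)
Your proof is correct, and it takes a genuinely different and in fact cleaner route than the paper. The paper starts from the Cauchy--Schwarz inequality $\big(\int \Phi\,dp\big)\big(\int \Phi|p|^2\,dp\big)\geq\big(\int \Phi|p|\,dp\big)^2$, restricts to $|p_1|$, and then faces the full momentum term $|\int p\Phi\,dp|^2$; it handles the cross-terms in $p_2,p_3$ by defining a remainder $R$ and invoking Lemma \ref{LemP} to show $R\leq C_{l,u}(\ln\tau+1)/\tau$, which is where assumption (3) of Theorem \ref{mainthm} (vanishing transverse momentum flux of the boundary data) and the largeness of $\tau$ enter. After the $a^2-b^2=(a+b)(a-b)$ factorisation in $p_1$, the paper lands on $4k - C_{l,u}(\ln\tau+1)/\tau\geq k$. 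Your route — the symmetrised Lagrange identity $N_\Phi E_\Phi-|P_\Phi|^2=\tfrac12\iint\Phi(p)\Phi(q)|p-q|^2\,dp\,dq$ followed by the lower bound $\Phi\geq g_L+g_R$, dropping the diagonal terms, and $(p_1-q_1)^2\geq 2|p_1||q_1|$ on the cross term — avoids Lemma \ref{LemP} and assumption (3) entirely, requires no smallness of $1/\tau$ at this step, and delivers the stronger conclusion $\geq 2k$. The price is that the paper's Cauchy--Schwarz-plus-factorisation machinery stays closer to the bookkeeping already set up in Lemma \ref{LemP} and the surrounding lemmas, but your double-integral identity is both more elementary and more informative, since it exposes the mechanism (disjoint $p_1$-supports of $f_L,f_R$) that makes $k$ the right constant.
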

\begin{proof}
We have from Cauchy-Schwarz inequality that
\begin{align}\label{CS}
\begin{split}
&\int_{\mathbb{R}^3}\Phi(f)dp\int_{\mathbb{R}^3}\Phi(f)|p|^2dp-\left|\int_{\mathbb{R}^3}\Phi(f)pdp\right|^2\cr
&\hspace{1.8cm}\geq \left(\int_{\mathbb{R}^3}|p|\Phi(f)dp\right)^2-\left|\int_{\mathbb{R}^3}p\Phi(f)dp\right|^2 \cr
&\hspace{1.8cm}\geq \left(\int_{\mathbb{R}^3}|p_1|\Phi(f)dp\right)^2-\left|\int_{\mathbb{R}^3}p\Phi(f)dp\right|^2 \cr
&\hspace{1.8cm}=\left(\int_{\mathbb{R}^3}|p_1|\Phi(f)dp\right)^2-\left|\int_{\mathbb{R}^3}p_1\Phi(f)dp\right|^2-R,
\end{split}
\end{align}
where
\begin{align*}
R&=M_2^2+M_3^2+2M_1M_2+2M_2M_3+2M_1M_3,
\end{align*}
and
\begin{align*}
M_i&=\int_{\mathbb{R}^3}p_i\Phi(f)dp \quad \textit{for} \quad i=1,2,3.
\end{align*}
From Lemma \ref{upperbdd}, we can bound $M_1$ as
\begin{align*}
M_1 \leq \bigg| \int_{\mathbb{R}^3}(1+|p|^2)\Phi(f)dp \bigg| \leq a_u+c_u,
\end{align*}
and $M_2$, $M_3$ decay as Lemma \ref{LemP}
\begin{align*}
M_2, ~M_3\leq C_{l,u}\left(\frac{\ln\tau+1}{\tau}\right).
\end{align*}
Therefore,
\begin{align}\label{R}
R\leq C_{l,u}\left(\frac{\ln\tau+1}{\tau}\right).
\end{align}
On the other hand, we use $a^2-b^2=(a+b)(a-b)$ to get
\begin{align}\label{kesti}
\begin{split}
&\left(\int_{\mathbb{R}^3}|p_1|\Phi(f)dp\right)^2-\left|\int_{\mathbb{R}^3}p_1\Phi(f)dp\right|^2\cr
 &\hspace{2cm}\geq \left\{\int_{\mathbb{R}^3}\big(|p_1|+p_1\big)\Phi(f)dp\right\}\left\{\int_{\mathbb{R}^3}\big(|p_1|-p_1\big)\Phi(f)dp\right\} \cr
&\hspace{2cm}=4\int_{p_1>0}p_1\Phi(f)dp\int_{p_1<0}|p_1|\Phi(f)dp,
\end{split}
\end{align}
and observe that from the definition of $\Phi$, and property $(\mathcal{B})$ of $\Lambda$: $N_f \leq a_u$  that
\begin{align*}
\int_{p_1>0}p_1\Phi(f)dp\geq \int_{p_1>0}p_1e^{-\frac{1}{\tau|p_1|}\int_0^xN_f(y)dy}f_L(p)dp\geq \int_{p_1>0}p_1e^{-\frac{a_u}{\tau|p_1|}}f_L(p)dp
\end{align*}
and
\begin{align*}
\int_{p_1<0}|p_1|\Phi(f)dp\geq \int_{p_1<0}|p_1|e^{-\frac{1}{\tau|p_1|}\int_x^1N_f(y)dy}f_R(p)dp\geq \int_{p_1<0}|p_1|e^{-\frac{a_u}{\tau|p_1|}}f_R(p)dp.
\end{align*}
We insert these lower bounds into (\ref{kesti}) and recall the definition of $k$ in (\ref{k}) to obtain
\begin{align}\label{kk}
\left(\int_{\mathbb{R}^3}|p_1|\Phi(f)dp\right)^2-\left|\int_{\mathbb{R}^3}p_1\Phi(f)dp\right|^2\geq 4k.
\end{align}
From (\ref{CS}), (\ref{R}) and (\ref{kk}), we have
\begin{align*}
\left(\int_{\mathbb{R}^3}\Phi(f)dp\right)\left(\int_{\mathbb{R}^3}\Phi(f)|p|^2dp\right)-\bigg|\int_{\mathbb{R}^3}\Phi(f)pdp\bigg|^2 \geq 4k-C_{l,u}\left(\frac{\ln\tau+1}{\tau}\right),
\end{align*}
which, for sufficiently large $\tau$, gives the desired result.
\end{proof}
\section{Continuity of quantum equilibrium $\mathcal{K}$}
In this section, we establish the continuity property of the quantum equilibrium $\mathcal{K}$, which is crucially used to show the contractiveness of $\Phi$ in Section 5.
\subsection{Transitional quantum local equilibrium $\mathcal{K}(\theta)$}
In this subsection, we define a transitional quantum local equilibrium. We start with the convexity  of our solution space.
\begin{lemma}\label{convex}
Let $f,g\in \Lambda$, Then the linear combination $(1-\theta)f+\theta g$ lies in $\Lambda$ for $\theta \in [0,1]$.
\end{lemma}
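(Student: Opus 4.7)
My plan is to verify each of the three defining conditions $(\mathcal{A})$, $(\mathcal{B})$, $(\mathcal{C})$ separately for the convex combination $h_\theta := (1-\theta)f + \theta g$. Since $\theta \in [0,1]$ and $f, g \geq 0$, nonnegativity $(\mathcal{A})$ is immediate. For $(\mathcal{B})$, the functionals $\int(\cdot)\,dp$ and $\int(\cdot)|p|^2\,dp$ are linear in the argument, so $\int h_\theta\,dp = (1-\theta)N_f + \theta N_g$ is a convex combination of numbers in $[a_l,a_u]$ and therefore stays in that interval, and likewise for the energy. The substantive part is $(\mathcal{C})$, which is genuinely nonlinear.

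For $(\mathcal{C})$, I plan to use the symmetrization identity
\begin{align*}
\Bigl(\int h\, dp\Bigr)\Bigl(\int h |p|^2\, dp\Bigr) - \Bigl|\int h p\, dp\Bigr|^2 = \frac{1}{2}\iint h(p) h(q)|p-q|^2\, dp\, dq,
\end{align*}
valid for any nonnegative $h$ (expand $|p-q|^2 = |p|^2+|q|^2 - 2p\cdot q$ and symmetrize in $(p,q)$). Substituting $h = (1-\theta)f + \theta g$ and using bilinearity yields the quadratic in $\theta$
\begin{align*}
\phi(\theta) := N_{h_\theta} E_{h_\theta} - |P_{h_\theta}|^2 = (1-\theta)^2 A + 2\theta(1-\theta) B + \theta^2 C,
\end{align*}
where $A := N_f E_f - |P_f|^2 \geq k$, $C := N_g E_g - |P_g|^2 \geq k$ by assumption $(\mathcal{C})$ on $f, g$, and $B := \tfrac{1}{2}\iint f(p) g(q)|p-q|^2\, dp\, dq$ is the cross term.

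The key step will be the cross-term bound $B \geq \sqrt{AC}$. I would establish it by decomposing
\[
p-q = \Bigl(p - \tfrac{P_f}{N_f}\Bigr) - \Bigl(q - \tfrac{P_g}{N_g}\Bigr) + \Bigl(\tfrac{P_f}{N_f} - \tfrac{P_g}{N_g}\Bigr),
\]
expanding the square, and observing that every mixed inner-product term integrates to zero because $\int f(p)(p - P_f/N_f)\, dp = 0$ and $\int g(q)(q - P_g/N_g)\, dq = 0$. What survives is
\begin{align*}
2B = \frac{N_g}{N_f} A + \frac{N_f}{N_g} C + N_f N_g \Bigl|\tfrac{P_f}{N_f} - \tfrac{P_g}{N_g}\Bigr|^2 \geq 2\sqrt{AC}
\end{align*}
by AM-GM on the first two terms. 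Feeding this back into the quadratic gives $\phi(\theta) \geq \bigl((1-\theta)\sqrt{A} + \theta\sqrt{C}\bigr)^2 \geq \min(A,C) \geq k$, which is exactly $(\mathcal{C})$ for $h_\theta$.

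The hard part, as I see it, is precisely the cross-term bound $B \geq \sqrt{AC}$: a direct Cauchy--Schwarz attempt tends to control $B$ from above rather than below, which goes the wrong way. The centered decomposition together with the vanishing of the first-moment integrals is what produces the clean two-term expression $\tfrac{N_g}{N_f}A + \tfrac{N_f}{N_g}C$ on which AM-GM can act, and getting that identity right is the crux of the argument.
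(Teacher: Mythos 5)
Your proof is correct, and it takes a genuinely different route from the paper's. The paper defines the moment matrix $M(f)=\begin{pmatrix}N_f&P_f\\ P_f&E_f\end{pmatrix}$, observes $G(f)=\det M(f)$, and invokes the Minkowski (Brunn--Minkowski) determinant inequality $\det M(\theta f+(1-\theta)g)\geq\{\det M(f)\}^{\theta}\{\det M(g)\}^{1-\theta}\geq k$. You instead use the symmetrization identity $G(h)=\tfrac12\iint h(p)h(q)|p-q|^2\,dp\,dq$, expand in $\theta$ to get the quadratic $(1-\theta)^2A+2\theta(1-\theta)B+\theta^2C$, and control the cross term by recentering around $P_f/N_f$ and $P_g/N_g$ so that the mixed inner products vanish, leaving $2B=\tfrac{N_g}{N_f}A+\tfrac{N_f}{N_g}C+N_fN_g\bigl|\tfrac{P_f}{N_f}-\tfrac{P_g}{N_g}\bigr|^2\geq2\sqrt{AC}$ by AM-GM. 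Both close the same way (a convex combination of $\sqrt A,\sqrt C$ squared dominates $\min(A,C)\geq k$). The trade-off: the paper's argument is much shorter but leans on the determinant version of Brunn--Minkowski as a black box, and as written its $2\times2$ matrix $M(f)$ actually has a vector $P_f\in\mathbb{R}^3$ in the off-diagonal slots, so one has to interpret the matrix (e.g.\ replace $P_f$ by $|P_f|$ and use the triangle inequality before Minkowski) to make the statement literal. Your argument is entirely elementary and self-contained, with every cancellation explicit; it also happens to give the slightly sharper lower bound $\bigl((1-\theta)\sqrt A+\theta\sqrt C\bigr)^2$ rather than only $A^{1-\theta}C^\theta$ or $\min(A,C)$. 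One minor clean-up: the symmetrization identity and the cross-term decomposition both tacitly use that $f$ has enough moments for all the integrals to converge absolutely, which is guaranteed here by membership in $L^1_2$; it's worth a half-sentence, but the argument is sound.
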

\begin{proof}
Since the conditions $(\mathcal{A})$ and $(\mathcal{B})$ of $\Lambda$ are trivially satisfied, we only consider $(\mathcal{C})$. For this, we define a functional $G$ by
\begin{align*}
G(f)=\left(\int_{\mathbb{R}^3}f(x,p)dp\right)\left(\int_{\mathbb{R}^3}f(x,p)|p|^2dp\right)-\left(\int_{\mathbb{R}^3}f(x,p)pdp\right)^2,
\end{align*}
and a matrix $M$ by
\begin{align*}
M(f)=\left(
\begin{array}{cc}
N_{f}&P_{f}\cr
P_{f}&E_{f}
\end{array}
\right),
\end{align*}
for $f\in \Lambda$.
We note that
\[
G(f)=\det M(f).
\]
Then, by Brum-Minkowski inequality, we have for $f,g\in \Lambda$
\begin{align*}
G(\theta f+(1-\theta)g)&=\det M(\theta f+(1-\theta)g)\cr
&\geq\left\{\det M(f)\right\}^{\theta}\left\{\det M(g)\right\}^{1-\theta}\cr
&\geq\left\{G(f)\right\}^{\theta}\left\{G(g)\right\}^{1-\theta}\cr
&\geq k^{\theta}k^{1-\theta}\cr
&=k.
\end{align*}
Therefore, $\theta f+(1-\theta)g\in \Lambda$.
\end{proof}
We now define the transitional macroscopic fields constructed from the linear combination $\theta f+(1-\theta)g$ as
\[
(N_{\theta},P_{\theta},E_{\theta})= (1-\theta) (N_{f},P_{f},E_{f})+ \theta(N_{g},P_{g},E_{g}),
\]
for $\theta \in [0,1]$. Now, since we have shown in Lemma \ref{convex} that $\theta f+(1-\theta)g\in \Lambda$, the  existence of the unique quantum equilibrium $\mathcal{K}(\theta)$:
\begin{align*}
\mathcal{K}(\theta)=\frac{1}{e^{a_{\theta}(x)\big|p-\frac{P(x)}{N(x)}\big|^2+c_{\theta}(x)}\pm1}
\end{align*}
which shares the same mass, momentum and energy with  $\theta f+(1-\theta)g$:
\begin{align*}
\int_{\mathbb{R}^3}\mathcal{K}(\theta)dp=N_{\theta},\quad \int_{\mathbb{R}^3}\mathcal{K}(\theta)dp=P_{\theta}\quad \int_{\mathbb{R}^3}\mathcal{K}(\theta)dp=E_{\theta}
\end{align*}
is guaranteed by Lemma \ref{determination}. We also recall from  Lemma \ref{bddac} that $a_{\theta}$ and $c_{\theta}$ are determined by
\begin{align}\label{actheta}
c_{\theta}=\beta_{\mathcal{K}}^{-1}\left(\frac{N_{\theta}}{\left(E_{\theta}-\frac{P_{\theta}^2}{N_{\theta}}\right)^{\frac{3}{5}}}\right), \quad a_{\theta}=\left(\frac{\int_{\mathbb{R}^3}\frac{1}{e^{|p|^2+c_{\theta}}\pm 1}dp}{N_{\theta}}\right)^\frac{2}{3},
\end{align}
and satisfy
\begin{align*}
a_*\leq a_{\theta}\leq a^*,\qquad c_*\leq c_{\theta}\leq c^*.
\end{align*}
for some positive constants $a_*$, $a^*$, $c^*$ and $c_*$.


\subsection{Derivatives of $\mathcal{F}(\theta)$}  We now derive  derivative estimates of $a_{\theta}$ and $c_{\theta}$, which will be needed later in the proof of the
continuity estimate of $\mathcal{K}(\theta)$. We first need the following estimate of $\beta_{\mathcal{K}}$.
\begin{lemma}\label{betaupper} Let $f,g\in\Lambda$, then $\beta_{\mathcal{K}}$ defined in \eqref{beta} satisfies
\begin{align*}
\bigg|\frac{1}{\beta_{\mathcal{K}}'(c_{\theta})}\bigg|< C_{l,u},
\end{align*}
where $C_{l,u}$ depends on constants of (\ref{iniconst}) and $k$.
\end{lemma}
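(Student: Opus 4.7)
The goal is to bound $|\beta_{\mathcal{K}}'(c_\theta)|$ from below uniformly in $f,g\in\Lambda$ and $\theta\in[0,1]$. My approach is to confine $c_\theta$ to a compact interval strictly interior to the smoothness region of $\beta_{\mathcal{K}}$, check that $\beta_{\mathcal{K}}'$ is continuous there, and then argue that it has no zeros on that interval. Compactness then yields the pointwise lower bound on $|\beta_{\mathcal{K}}'(c_\theta)|$, and hence the desired upper bound on its reciprocal.

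\textbf{Step 1 (localization of $c_\theta$).} By Lemma~\ref{convex}, $\theta f + (1-\theta)g \in \Lambda$, so Lemma~\ref{bddac} applied to this convex combination places $c_\theta\in[c_*,c^*]$. Crucially, assumption~\eqref{inibeta} combined with the strict monotonicity of $\beta_{\mathcal{K}}^{-1}$ gives $c_*>0$ in the boson case and $c_*>-\ln 3$ in the fermion case. Thus $[c_*,c^*]$ is a compact set sitting strictly inside the open smoothness domain of $\beta_{\mathcal{K}}$, with a constant gap depending only on $a_u,a_l,c_u,k$, i.e.\ on $C_{l,u}$.

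\textbf{Step 2 (continuity of $\beta_{\mathcal{K}}'$).} Writing $A(c)=\int_{\mathbb{R}^3}\phi\,dp$ and $B(c)=\int_{\mathbb{R}^3}|p|^2\phi\,dp$ with $\phi=(e^{|p|^2+c}\pm 1)^{-1}$, one has
\[
\beta_{\mathcal{K}}'(c)=\frac{A'(c)B(c)-\tfrac{3}{5}A(c)B'(c)}{B(c)^{8/5}},
\]
and the integrands defining $A',B'$ (proportional to $\phi(1\mp\phi)$) decay exponentially in $|p|$ uniformly for $c$ in compact subsets of the smoothness domain. Dominated convergence then makes $A,B,A',B'$ continuous on $[c_*,c^*]$, and $B\geq B(c^*)>0$, so $\beta_{\mathcal{K}}'$ is continuous on $[c_*,c^*]$.

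\textbf{Step 3 (no zeros, and the main obstacle).} The most delicate point is verifying $\beta_{\mathcal{K}}'(c)\neq 0$ for every $c\in[c_*,c^*]$. Strict monotonicity from Lemma~\ref{monotone} only yields $\beta_{\mathcal{K}}'\leq 0$, so isolated zeros are not a priori excluded. I would rule them out directly by a covariance/Chebyshev-type argument: using $-A'=\int \phi(1\mp\phi)\,dp$ and $-B'=\int|p|^2\phi(1\mp\phi)\,dp$, the numerator $A'B-\tfrac{3}{5}AB'$ can be rewritten as a bilinear expression in the comonotone weights $\phi$ and $\phi(1\mp\phi)$ against the weight $|p|^2$; monotonicity of $|p|^2$ gives strict positivity of the relevant rearrangement inequality, forcing $\beta_{\mathcal{K}}'(c)<0$ pointwise.

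\textbf{Step 4 (conclusion).} Continuity of $\beta_{\mathcal{K}}'$ on the compact set $[c_*,c^*]$, together with the strict sign from Step 3, implies $\min_{[c_*,c^*]}|\beta_{\mathcal{K}}'|>0$, and this minimum depends only on $a_u,a_l,c_u,k$, i.e.\ is a constant $C_{l,u}$. Since $c_\theta\in[c_*,c^*]$ by Step 1, we conclude $|1/\beta_{\mathcal{K}}'(c_\theta)|\leq C_{l,u}$. The main obstacle is clearly Step 3, upgrading strict monotonicity of $\beta_{\mathcal{K}}$ to strict negativity of its derivative; everything else is compactness plus smoothness of Fermi--Dirac/Bose--Einstein integrals away from their critical values.
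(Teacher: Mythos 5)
Your overall structure (localize $c_\theta\in[c_*,c^*]$ via Lemma~\ref{bddac}, invoke continuity of $\beta_{\mathcal{K}}'$, combine with strict negativity and compactness) is the same as the paper's, so Steps 1, 2, and 4 match. The one place you deviate is Step 3, and it is worth a closer look.

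You correctly observe that strict monotonicity of $\beta_{\mathcal{K}}$ does not by itself force $\beta_{\mathcal{K}}'<0$ pointwise; the paper glosses over this and simply writes ``Lemma~\ref{monotone} implies $\beta_{\mathcal{K}}'(c)<0$''. What makes the paper's step legitimate is that the proofs cited for Lemma~\ref{monotone} (Lu for bosons, Bae--Yun for fermions) establish strict monotonicity precisely by computing $\beta_{\mathcal{K}}'$ and showing it is strictly negative on the stated intervals, so the paper is implicitly invoking that computation rather than deducing the sign of the derivative from monotonicity alone. Your substitute argument in Step 3, however, does not go through as stated. Writing $\phi=(e^{|p|^2+c}\pm1)^{-1}$, $g=\phi(1\mp\phi)=-\partial_c\phi$, the sign of $\beta_{\mathcal{K}}'$ is the sign of $\tfrac{3}{5}\bigl(\int\phi\bigr)\bigl(\int|p|^2g\bigr)-\bigl(\int g\bigr)\bigl(\int|p|^2\phi\bigr)$, i.e.\ you need $\tfrac{3}{5}\,\frac{\int|p|^2 g}{\int g}<\frac{\int|p|^2\phi}{\int\phi}$. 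For bosons, $g/\phi=1+\phi$ is decreasing in $|p|$, so the Chebyshev correlation inequality already gives $\frac{\int|p|^2 g}{\int g}<\frac{\int|p|^2\phi}{\int\phi}$, and the factor $3/5$ only helps. But for fermions, $g/\phi=1-\phi$ is \emph{increasing} in $|p|$, so the correlation inequality gives $\frac{\int|p|^2 g}{\int g}>\frac{\int|p|^2\phi}{\int\phi}$ — the wrong direction — and you must win back the inequality using the factor $3/5$. That is exactly the delicate quantitative step, and it is the reason the paper (Remark~\ref{rmk}(2)) restricts $\beta_{\mathcal{F}}$ to $(-\ln 3,\infty)$ and defers strict negativity there to the cited reference rather than claiming it on all of $\mathbb{R}$. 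So your Step 3, as a self-contained proof of $\beta_{\mathcal{F}}'<0$, has a real gap; citing Lemma~\ref{monotone} together with its underlying references, as the paper does, is the intended route.
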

\begin{proof}
By definition given in (\ref{beta}), $\beta_{\mathcal{K}}$ is an infinitely differentiable function.
On the other hand, Lemma \ref{monotone} implies that $\beta^{\prime}_{\mathcal{K}}(c)< 0$. Therefore, we see from Lemma \ref{bddac}
that $\beta^{\prime}_{\mathcal{K}}(c)$ is a strictly negative continuous function defined on a closed interval $[c_*,c^*]$. Hence, there exists positive $C$ such that $|\beta_{\mathcal{K}}'(c)|\geq C$, which gives the desired result. 
\end{proof}
\begin{lemma}\label{diffc} We have
\begin{align*}
\bigg|\left(\frac{\partial c_{\theta}}{\partial N_\theta},\frac{\partial c_\theta}{\partial P_\theta},\frac{\partial c_\theta}{\partial E_\theta}\right) \bigg| \leq C_{l,u}.
\end{align*}
\end{lemma}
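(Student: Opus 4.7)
The plan is to differentiate the defining relation \eqref{actheta} for $c_\theta$ implicitly. Writing this relation in the equivalent form
\[
\beta_{\mathcal{K}}(c_\theta)=\frac{N_\theta^{8/5}}{(E_\theta N_\theta-|P_\theta|^2)^{3/5}},
\]
and applying $\partial/\partial X$ for $X\in\{N_\theta,P_\theta,E_\theta\}$ yields
\[
\frac{\partial c_\theta}{\partial X}=\frac{1}{\beta'_{\mathcal{K}}(c_\theta)}\cdot\frac{\partial}{\partial X}\!\left(\frac{N_\theta^{8/5}}{(E_\theta N_\theta-|P_\theta|^2)^{3/5}}\right).
\]
So the proof reduces to bounding the two factors on the right-hand side separately.

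For the first factor, Lemma \ref{betaupper} already gives $|1/\beta'_{\mathcal{K}}(c_\theta)|\le C_{l,u}$, using that $c_\theta\in[c_*,c^*]$ (which is guaranteed by Lemma \ref{convex} together with Lemma \ref{bddac} applied to the convex combination $(1-\theta)f+\theta g\in\Lambda$). For the second factor, I would compute the three derivatives explicitly: e.g.
\[
\frac{\partial}{\partial N_\theta}\!\left(\frac{N_\theta^{8/5}}{(E_\theta N_\theta-|P_\theta|^2)^{3/5}}\right)=\frac{\tfrac{8}{5}N_\theta^{3/5}(E_\theta N_\theta-|P_\theta|^2)-\tfrac{3}{5}N_\theta^{8/5}E_\theta}{(E_\theta N_\theta-|P_\theta|^2)^{8/5}},
\]
and analogously for $P_\theta$ and $E_\theta$ (producing similar rational expressions in $N_\theta,P_\theta,E_\theta$). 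Each of these is then bounded using the uniform control on the macroscopic fields inherited from $\Lambda$: from $(\mathcal{B})$ we have $a_l\le N_\theta\le a_u$ and $c_l\le E_\theta\le c_u$, while $|P_\theta|\le\int(1+|p|^2)[(1-\theta)f+\theta g]\,dp\le a_u+c_u$, and crucially from $(\mathcal{C})$ we have $E_\theta N_\theta-|P_\theta|^2\ge k>0$, which keeps the denominators away from zero.

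The main obstacle is not the differentiation itself, which is entirely routine, but making sure every term that appears in the denominators admits a quantitative positive lower bound. The lower bound on $E_\theta N_\theta-|P_\theta|^2$ is exactly what is supplied by property $(\mathcal{C})$ of $\Lambda$ (which is preserved under convex combinations by Lemma \ref{convex}), and the lower bound on $|\beta'_{\mathcal{K}}(c_\theta)|$ is exactly Lemma \ref{betaupper}; with these in hand, the resulting estimates depend only on the constants in \eqref{iniconst} and $k$, yielding the claimed bound $C_{l,u}$.
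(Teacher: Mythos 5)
Your proposal is correct and follows essentially the same route as the paper: implicit differentiation of the relation $\beta_{\mathcal{K}}(c_\theta)=N_\theta^{8/5}/(E_\theta N_\theta-|P_\theta|^2)^{3/5}$, the lower bound on $|\beta'_{\mathcal{K}}|$ from Lemma \ref{betaupper}, and the uniform bounds $a_l\le N_\theta\le a_u$, $E_\theta\le c_u$, $|P_\theta|\le a_u+c_u$, $E_\theta N_\theta-|P_\theta|^2\ge k$ inherited from $\Lambda$ (via Lemma \ref{convex}) to control the explicit rational derivatives. The only cosmetic difference is that you differentiate the cleared form $N_\theta^{8/5}/(E_\theta N_\theta-|P_\theta|^2)^{3/5}$ whereas the paper differentiates $N_\theta/(E_\theta-|P_\theta|^2/N_\theta)^{3/5}$; these yield algebraically identical expressions.
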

\begin{proof}
Recall that $c_{\theta}$ is function of $N_{\theta}$, $P_{\theta}$ and $E_{\theta}$:
\begin{align*}
c_{\theta}=\beta_{\mathcal{K}}^{-1}\left\{ \frac{N_{\theta}}{(E_{\theta}-\frac{P_{\theta}^2}{N_{\theta}})^{\frac{3}{5}}}\right\}.
\end{align*}
(1) By an explicit computation, we get
\begin{align*}
\bigg|\frac{\partial c_\theta}{\partial N_\theta}\bigg|
= \bigg|\frac{1}{\beta_{\mathcal{K}}'(c_\theta)}\bigg| \bigg|\frac{\partial}{\partial N_\theta}\left( \frac{N_\theta}{\left(E_\theta-\frac{P_\theta^2}{N_\theta}\right)^{\frac{3}{5}}}\right)	\bigg|
= \bigg|\frac{1}{\beta_{\mathcal{K}}'(c_\theta)}\bigg| \bigg|\frac{E_\theta-\frac{8}{5}\frac{P_\theta^2}{N_\theta}}{\left(E_\theta-\frac{P_\theta^2}{N_\theta}\right)^{\frac{8}{5}}}\bigg|.
\end{align*}
We then use $N_{\theta}\leq a_u$, $E_{\theta}\leq c_u$ and $N_{\theta}E_{\theta}-P_{\theta}^2\geq k$ together with Lemma \ref{betaupper} to obtain
\begin{align*}
\bigg|\frac{\partial c_\theta}{\partial N_\theta}\bigg|\leq  \bigg|\frac{1}{\beta_{\mathcal{K}}'(c_\theta)}\bigg| \frac{a_u^{\frac{8}{5}}c_u}{k^{\frac{8}{5}}}	\leq  C_{l,u}.
\end{align*}
(2) Similarly, we compute
\begin{align*}
\bigg|\frac{\partial c_\theta}{\partial P_\theta}\bigg|
= \bigg|\frac{1}{\beta_{\mathcal{K}}'(c_\theta)}\bigg| \bigg|\frac{\partial}{\partial P_\theta}\left(\ \frac{N_\theta}{(E_\theta-\frac{P_\theta^2}{N_\theta})^{\frac{3}{5}}}\right)	\bigg|
= \bigg|\frac{1}{\beta_{\mathcal{K}}'(c_\theta)}\bigg| \frac{\frac{6}{5}|P_\theta|}{(E_\theta-\frac{P_\theta^2}{N_\theta})^{\frac{8}{5}}}.
\end{align*}
Since  $|P_{\theta}| \leq a_u+c_u$, we have
\begin{align*}
\bigg|\frac{\partial c_\theta}{\partial P_\theta}\bigg|
\leq \bigg|\frac{1}{\beta_{\mathcal{K}}'(c_\theta)}\bigg| \frac{\frac{6}{5}(a_u+c_u){a_u}^{\frac{8}{5}}}{k^{\frac{8}{5}}}	
\leq  C_{l,u}.
\end{align*}
(3) In an almost identical manner, we compute
\begin{align*}
\bigg|\frac{\partial c_\theta}{\partial E_\theta}\bigg|
&= \bigg|\frac{1}{\beta_{\mathcal{K}}'(c_\theta)}\bigg| \bigg|\frac{\partial}{\partial E_\theta}\left( \frac{N_\theta}{(E_\theta-\frac{P_\theta^2}{N_\theta})^{\frac{3}{5}}}\right)	\bigg|\cr
&= \bigg|\frac{1}{\beta_{\mathcal{K}}'(c_\theta)} \bigg| \frac{\frac{3}{5}|N_\theta|}{(E_\theta-\frac{P_\theta^2}{N_\theta})^{\frac{8}{5}}}\cr	
&\leq \bigg|\frac{1}{\beta_{\mathcal{K}}'(c_\theta)} \bigg| \frac{\frac{3}{5}{a_u}^{\frac{13}{5}}}{k^{\frac{8}{5}}}	\cr
&\leq  C_{l,u}.
\end{align*}
\end{proof}
\begin{lemma}\label{diffa} We have
\begin{align*}
\bigg|\left(\frac{\partial a_\theta}{\partial N_\theta}, \frac{\partial a_\theta}{\partial P_\theta}, \frac{\partial a_\theta}{\partial E_\theta} \right)\bigg|\leq C_{l,u}.
\end{align*}
\end{lemma}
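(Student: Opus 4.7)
The plan is to differentiate the explicit expression \eqref{actheta} for $a_\theta$ and apply the chain rule, treating $c_\theta$ as the intermediate variable depending on $(N_\theta,P_\theta,E_\theta)$. Writing $I(c)=\int_{\mathbb{R}^3}(e^{|p|^2+c}\pm1)^{-1}dp$, we have $a_\theta = (I(c_\theta)/N_\theta)^{2/3}$, so the three partials are
\[
\frac{\partial a_\theta}{\partial N_\theta}=\frac{2}{3}\Bigl(\tfrac{I(c_\theta)}{N_\theta}\Bigr)^{-1/3}\!\Bigl(\tfrac{I'(c_\theta)}{N_\theta}\tfrac{\partial c_\theta}{\partial N_\theta}-\tfrac{I(c_\theta)}{N_\theta^2}\Bigr),\quad \frac{\partial a_\theta}{\partial P_\theta}=\frac{2}{3}\Bigl(\tfrac{I(c_\theta)}{N_\theta}\Bigr)^{-1/3}\tfrac{I'(c_\theta)}{N_\theta}\tfrac{\partial c_\theta}{\partial P_\theta},
\]
and analogously for $\partial_{E_\theta}a_\theta$. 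So the task reduces to producing uniform bounds on each of the four factors $(I(c_\theta)/N_\theta)^{-1/3}$, $1/N_\theta$, $I(c_\theta)$, and $I'(c_\theta)$, after which the result follows immediately from Lemma \ref{diffc}.

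The first two factors are immediate: by Lemma \ref{bddac}, $a_\theta\ge a_*>0$, which gives $(I(c_\theta)/N_\theta)^{-1/3}=a_\theta^{-1/2}\le a_*^{-1/2}$, and $N_\theta\ge a_l>0$ gives $1/N_\theta\le 1/a_l$. The third factor is bounded by $I(c_\theta)\le N_\theta a_\theta^{3/2}\le a_u(a^*)^{3/2}$. The substantive point is the fourth factor: we need
\[
|I'(c_\theta)|=\int_{\mathbb{R}^3}\frac{e^{|p|^2+c_\theta}}{(e^{|p|^2+c_\theta}\pm1)^2}\,dp \le C_{l,u},
\]
uniformly in $c_\theta\in[c_*,c^*]$. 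In the fermion case, the integrand is globally bounded by $1/4$ and decays like $e^{-|p|^2}$ at infinity, so the bound is trivial.

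The main obstacle is the boson case, where the integrand $e^{|p|^2+c}/(e^{|p|^2+c}-1)^2$ behaves like $(|p|^2+c)^{-2}$ near $p=0$ and is not integrable when $c=0$. Here we exploit the strict inequality in the hypothesis \eqref{inibeta}: since $a_u^{8/5}/k^{3/5}<\beta_{\mathcal{B}}(0)$ and $\beta_{\mathcal{B}}^{-1}$ is strictly decreasing on $[0,\infty)$, we have $c_*=\beta_{\mathcal{B}}^{-1}(a_u^{8/5}/k^{3/5})>0$ strictly, so $c_\theta\ge c_*>0$. On the compact interval $[c_*,c^*]\subset(0,\infty)$, the integrand of $I'$ is continuous in $c$, bounded near $p=0$ by $e^{c^*}/(e^{c_*}-1)^2$, and decays exponentially at infinity; the resulting bound on $|I'(c_\theta)|$ is a constant of the type $C_{l,u}$, completing the proof. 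Assembling all four bounds gives the stated estimate.
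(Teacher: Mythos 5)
Your argument is correct and follows essentially the same route as the paper's: differentiate the explicit formula $a_\theta = (I(c_\theta)/N_\theta)^{2/3}$, invoke Lemma \ref{diffc} via the chain rule, and bound the remaining factors using Lemma \ref{bddac}. You are in fact more careful than the paper on the one genuinely delicate point, namely the bosonic bound on $|I'(c_\theta)|$: you correctly observe that the strict inequality in \eqref{inibeta} forces $c_* > 0$, which is what controls the non-integrable singularity of $e^{u}/(e^{u}-1)^2$ near $u=0$, whereas the paper's displayed bound $|I'(c_\theta)| \leq \int_{\mathbb{R}^3}(e^{|p|^2+c_*}\pm 1)^{-1}\,dp$ holds for bosons only up to the multiplicative constant $e^{c_*}/(e^{c_*}-1)$, which itself is finite precisely because $c_* > 0$.
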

\begin{proof}
(1) We recall (\ref{actheta}) and compute
\begin{align*}
\frac{\partial a_\theta}{\partial N_\theta}
&= \frac{2}{3} \left(\frac{\int_{\mathbb{R}^3}\frac{1}{e^{|p|^2+c_\theta}\pm1}dp}{N_\theta}\right)^{-\frac{1}{3}}
\frac{\partial}{\partial N_\theta } \left(\frac{\int_{\mathbb{R}^3}\frac{1}{e^{|p|^2+c_\theta}\pm1}dp}{N_\theta}\right)	\cr
&= \frac{2}{3} \left(\frac{\int_{\mathbb{R}^3}\frac{1}{e^{|p|^2+c_\theta}\pm1}dp}{N_\theta}\right)^{-\frac{1}{3}}
\left(\frac{N_\theta \int_{\mathbb{R}^3}\frac{-e^{|p|^2+c_\theta}}{(e^{|p|^2+c_\theta}\pm1)^2}\frac{\partial c_\theta}{\partial N_\theta}dp-\int_{\mathbb{R}^3}\frac{1}{e^{|p|^2+c_\theta}\pm1}dp}{N_\theta^2}\right).
\end{align*}
It then follows directly from from $a_l\leq N_{\theta}\leq a_u$, Lemma \ref{bddac}, and Lemma \ref{diffc} that
\begin{align*}
\frac{\partial a_\theta}{\partial N_\theta}
&\leq C \left(\int_{\mathbb{R}^3}\frac{1}{e^{|p|^2+c^*}\pm1}dp\right)^{-\frac{1}{3}}a_u^{\frac{1}{3}}
\left(\frac{C_{l,u}a_u\int_{\mathbb{R}^3}\frac{1}{e^{|p|^2+c_*}\pm1}dp+\int_{\mathbb{R}^3}\frac{1}{e^{|p|^2+c_*}\pm1}dp}{a_l^2}\right)	\cr
&\leq C_{l,u}.
\end{align*}
(2) In a similar manner, we have
\begin{align*}
\left(\frac{\partial a_\theta}{\partial P_\theta}\right)_i
&= \frac{2}{3} \left(\frac{\int_{\mathbb{R}^3}\frac{1}{e^{|p|^2+c_\theta}\pm1}dp}{N_\theta}\right)^{-\frac{1}{3}}
\frac{\partial}{\partial P_{\theta i} } \left(\frac{\int_{\mathbb{R}^3}\frac{1}{e^{|p|^2+c_\theta}\pm1}dp}{N_\theta}\right)	\cr
&= \frac{2}{3} \left(\frac{\int_{\mathbb{R}^3}\frac{1}{e^{|p|^2+c_\theta}\pm1}dp}{N_\theta}\right)^{-\frac{1}{3}}
\left(\frac{\int_{\mathbb{R}^3}\frac{-e^{|p|^2+c_\theta}}{(e^{|p|^2+c_\theta}\pm1)^2}\frac{\partial c_\theta}{\partial P_{\theta i}}dp}{N_\theta}\right)	\cr
&\leq C \left(\int_{\mathbb{R}^3}\frac{1}{e^{|p|^2+c_*}\pm1}dp\right)^{-\frac{1}{3}}a_u^{\frac{1}{3}}
\left(C_{l,u}\frac{\int_{\mathbb{R}^3}\frac{1}{e^{|p|^2+c_*}\pm1}dp}{a_l}\right)	\cr
&\leq C_{l,u}.
\end{align*}
(3) Replacing $\frac{\partial }{\partial P_{\theta i}}$ by $\frac{\partial}{\partial E_\theta}$ in (2), we get the same result for $\frac{\partial a_\theta}{\partial E_\theta}$.
\end{proof}
\subsection{Continuity of $\mathcal{K}$}
We now prove the main result of the this section:
\begin{proposition}\label{banaBf} Let $f,g \in \Lambda$. Then the quantum equilibrium $\mathcal{K}$ satisfies following property: 
\begin{align*}
|\mathcal{K}(f)-\mathcal{K}(g)| \leq C_{l,u} \sup_{x}||f-g||_{L_2^1}e^{-C_{l,u}|p|^2}.
\end{align*}
\end{proposition}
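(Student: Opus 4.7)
\medskip

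\noindent\textbf{Proof proposal.}
The plan is to interpolate between $f$ and $g$ using the transitional quantum equilibrium $\mathcal{K}(\theta)$ constructed in Subsection 4.1, and then apply the fundamental theorem of calculus
\begin{equation*}
\mathcal{K}(f)-\mathcal{K}(g)=\int_0^1 \frac{d}{d\theta}\mathcal{K}(\theta)\,d\theta.
\end{equation*}
By Lemma \ref{convex} the segment $\theta f+(1-\theta)g$ lies in $\Lambda$, so Lemma \ref{determination} and Lemma \ref{bddac} apply uniformly in $\theta$, yielding $a_*\le a_\theta\le a^*$ and $c_*\le c_\theta\le c^*$, along with $|P_\theta/N_\theta|\le (a_u+c_u)/a_l$ as in \eqref{PN}.

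Next I would expand $\frac{d}{d\theta}\mathcal{K}(\theta)$ via the chain rule, viewing $\mathcal{K}(\theta)$ as a smooth function of the three parameters $a_\theta,\,c_\theta,\,P_\theta/N_\theta$, which are themselves smooth functions of the affine quantities $(N_\theta,P_\theta,E_\theta)=(1-\theta)(N_f,P_f,E_f)+\theta(N_g,P_g,E_g)$. A direct calculation gives
\begin{align*}
\frac{\partial \mathcal{K}(\theta)}{\partial c_\theta}
&=\mp\,\frac{e^{a_\theta|p-P_\theta/N_\theta|^2+c_\theta}}{\bigl(e^{a_\theta|p-P_\theta/N_\theta|^2+c_\theta}\pm 1\bigr)^2},\\
\frac{\partial \mathcal{K}(\theta)}{\partial a_\theta}
&=\mp\,\bigl|p-P_\theta/N_\theta\bigr|^2\,\frac{e^{a_\theta|p-P_\theta/N_\theta|^2+c_\theta}}{\bigl(e^{a_\theta|p-P_\theta/N_\theta|^2+c_\theta}\pm 1\bigr)^2},\\
\nabla_{P_\theta/N_\theta} \mathcal{K}(\theta)
&=\pm\,2a_\theta(p-P_\theta/N_\theta)\,\frac{e^{a_\theta|p-P_\theta/N_\theta|^2+c_\theta}}{\bigl(e^{a_\theta|p-P_\theta/N_\theta|^2+c_\theta}\pm 1\bigr)^2}.
\end{align*}
Exactly as in the proof of Lemma \ref{Bf}, splitting $|p-P_\theta/N_\theta|^2\ge |p|^2/2 - |P_\theta/N_\theta|^2$ and using the uniform bounds $a_\theta\ge a_*>0$, $c_\theta\ge c_*$ together with the boundedness of $|P_\theta/N_\theta|$ shows that each of these three partial derivatives is pointwise bounded by $C_{l,u}\,e^{-C_{l,u}|p|^2}$; the extra polynomial factors $|p-P_\theta/N_\theta|^2$ and $|p-P_\theta/N_\theta|$ are absorbed into the Gaussian. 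The scalar prefactors $\partial_\theta a_\theta,\partial_\theta c_\theta,\partial_\theta(P_\theta/N_\theta)$ are controlled by the chain rule applied once more, using Lemmas \ref{diffa} and \ref{diffc} for the $(N_\theta,P_\theta,E_\theta)$ derivatives, and noting that $P_\theta/N_\theta$ has derivatives bounded by $C_{l,u}$ since $N_\theta\ge a_l>0$.

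Finally, because $N_\theta,P_\theta,E_\theta$ are affine in $\theta$,
\begin{equation*}
\bigl|\partial_\theta N_\theta\bigr|+\bigl|\partial_\theta P_\theta\bigr|+\bigl|\partial_\theta E_\theta\bigr|
=|N_f-N_g|+|P_f-P_g|+|E_f-E_g|\le C\,\sup_x\|f-g\|_{L^1_2}.
\end{equation*}
Combining these ingredients and integrating in $\theta\in[0,1]$ produces the stated estimate. The main technical obstacle is the third step, namely absorbing the polynomial factors $|p-P_\theta/N_\theta|^2$ and $|p-P_\theta/N_\theta|$ that appear when differentiating $\mathcal{K}(\theta)$ in $a_\theta$ and in the drift $P_\theta/N_\theta$ into a clean Gaussian $e^{-C_{l,u}|p|^2}$; for this I would reuse, almost verbatim, the splitting argument from Lemma \ref{Bf}, which relies critically on the strict positivity $a_*>0$ furnished by Lemma \ref{bddac} (for bosons, by the assumption \eqref{inibeta} via $c_*\ge 0$; for fermions, by the fact that $a_\theta>0$ is trivial).
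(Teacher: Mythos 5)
Your proposal follows essentially the same route as the paper: writing $\mathcal{K}(f)-\mathcal{K}(g)=\int_0^1\partial_\theta\mathcal{K}(\theta)\,d\theta$ along the segment $\theta f+(1-\theta)g$ (justified by Lemma \ref{convex}), expanding $\partial_\theta\mathcal{K}(\theta)$ via the chain rule through $(N_\theta,P_\theta,E_\theta)$, bounding the parameter derivatives by Lemmas \ref{bddac}, \ref{diffc}, \ref{diffa}, absorbing the resulting polynomial factors into a Gaussian exactly as in Lemma \ref{Bf}, and finally using $|N_f-N_g|+|P_f-P_g|+|E_f-E_g|\le C\sup_x\|f-g\|_{L^1_2}$. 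The only cosmetic difference is that you factor the chain rule explicitly through the intermediate variables $(a_\theta,c_\theta,P_\theta/N_\theta)$ before passing to $(N_\theta,P_\theta,E_\theta)$, whereas the paper writes $\partial\mathcal{K}(\theta)/\partial N_\theta$ etc.\ directly; these are the same computation.
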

\begin{proof}

We apply taylor's theorem around $\theta=0$ to have
\begin{align*}
\mathcal{K}(1)-\mathcal{K}(0)=\int_0^1 \mathcal{K}'(\theta) d\theta,
\end{align*}
so that
\begin{align}\label{bfbg}
\begin{split}
\mathcal{K}(f)-\mathcal{K}(g) &= (N_g-N_f)\int_0^1 \frac{\partial \mathcal{K}(\theta)}{\partial N_{\theta}} d\theta	\cr
&+ (P_g-P_f)\int_0^1 \frac{\partial \mathcal{K}(\theta)}{\partial P_{\theta}} d\theta	\cr
&+ (E_g-E_f)\int_0^1 \frac{\partial \mathcal{K}(\theta)}{\partial E_{\theta}} d\theta.
\end{split}
\end{align}
To estimate the first integral, we compute
\begin{align*}
\frac{\partial \mathcal{K}(\theta)}{\partial N_{\theta}}&= \frac{- \left\{ \frac{\partial a_{\theta}}{\partial N_{\theta}}|p-\frac{P_{\theta}}{N_{\theta}}|^2+a_{\theta}\frac{2P_{\theta}}{N_{\theta}^2}(p-\frac{P_{\theta}}{N_{\theta}})+\frac{\partial c_{\theta}}{\partial N_{\theta}} \right\}e^{a_{\theta}|p-\frac{P_{\theta}}{N_{\theta}}|^2+c_{\theta}}}{(e^{a_{\theta}|p-\frac{P_{\theta}}{N_{\theta}}|^2+c_{\theta}}\pm1)^2}.
\end{align*}
From Lemma \ref{Bf} we observe $\mathcal{K}(f) \leq C_{l,u}$ to obtain
\begin{align*}
\frac{e^{a_{\theta}|p-\frac{P_{\theta}}{N_{\theta}}|^2+c_{\theta}}}{e^{a_{\theta}|p-\frac{P_{\theta}}{N_{\theta}}|^2+c_{\theta}}\pm1} = 1\mp \frac{1}{e^{a_{\theta}|p-\frac{P_{\theta}}{N_{\theta}}|^2+c_{\theta}}\pm1} \leq C_{l,u}.
\end{align*}
With these computation and Lemma \ref{bddac}, Lemma \ref{diffc} and Lemma \ref{diffa}, we get
\begin{align*}
\bigg|\frac{\partial \mathcal{K}(\theta)}{\partial N_{\theta}}\bigg|&\leq  C_{l,u}\left(\bigg|p-\frac{P_{\theta}}{N_{\theta}}\bigg|^2+\bigg|p-\frac{P_{\theta}}{N_{\theta}}\bigg|+1\right)\frac{1}{e^{a_{\theta}|p-\frac{P_{\theta}}{N_{\theta}}|^2+c_{\theta}}\pm1}.
\end{align*}
Since  $|P| \leq a_u+c_u$ and $N_{\theta}\geq a_{\ell}$, we find
\begin{align*}
\bigg|\frac{\partial \mathcal{K}(\theta)}{\partial N_{\theta}}\bigg| &\leq  C_{l,u}\left(|p|^2+1\right)\mathcal{K}(\theta),
\end{align*}
which, thanks to Lemma \ref{Bf}, gives
\begin{align*}
\bigg|\frac{\partial \mathcal{K}(\theta)}{\partial N_{\theta}}\bigg| &\leq C_{l,u}e^{-C_{l,u}|p|^2}.
\end{align*}
Similarly, we have $(i=1,2,3)$
\begin{align*}
\bigg|\left(\frac{\partial \mathcal{K}(\theta)}{\partial P_{\theta}}\right)_i\bigg|&=\bigg|\frac{-\left\{ \frac{\partial a_{\theta}}{\partial P_{\theta}}|p-\frac{P_{\theta}}{N_{\theta}}|^2-a_{\theta}\frac{2}{N_{\theta}}(p-\frac{P_{\theta}}{N_{\theta}})+\frac{\partial c_{\theta}}{\partial P_{\theta}} \right\}e^{a_{\theta}|p-\frac{P_{\theta}}{N_{\theta}}|^2+c_{\theta}}}{(e^{a_{\theta}|p-\frac{P_{\theta}}{N_{\theta}}|^2+c_{\theta}}\pm1)^2}\bigg|	\cr
&\leq  C_{l,u}\left(|p|^2+1\right)\frac{1}{e^{a_{\theta}|p-\frac{P_{\theta}}{N_{\theta}}|^2+c_{\theta}}\pm1}	\cr
&\leq C_{l,u}e^{-C_{l,u}|p|^2},
\end{align*}
and
\begin{align*}
\bigg|\frac{\partial \mathcal{K}(\theta)}{\partial E_{\theta}}\bigg| &= \bigg|\frac{-\left\{ \frac{\partial a_{\theta}}{\partial E_{\theta}}|p-\frac{P_{\theta}}{N_{\theta}}|^2+\frac{\partial c_{\theta}}{\partial E_{\theta}} \right\}e^{a_{\theta}|p-\frac{P_{\theta}}{N_{\theta}}|^2+c_{\theta}}}{(e^{a_{\theta}|p-\frac{P_{\theta}}{N_{\theta}}|^2+c_{\theta}}\pm1)^2}\bigg|	\cr
&\leq  C_{l,u}\left(|p|^2+1\right)\frac{1}{e^{a_{\theta}|p-\frac{P_{\theta}}{N_{\theta}}|^2+c_{\theta}}\pm1}	\cr
&\leq C_{l,u}e^{-C_{l,u}|p|^2}.
\end{align*}
Substituting these estimates into (\ref{bfbg}) yields the desired result:
\begin{align*}
&|\mathcal{K}(f)-\mathcal{K}(g)| \cr
&\quad \leq \left(\bigg|\int_{\mathbb{R}^3}(f-g)dp\bigg| + \bigg|\int_{\mathbb{R}^3}(f-g)pdp\bigg| + \bigg|\int_{\mathbb{R}^3}(f-g)|p|^2dp\bigg|\right) C_{l,u}e^{-C_{l,u}|p|^2}\cr
&\quad \leq C_{l,u}\sup_{x}||f-g||_{L_2^1}e^{-C_{l,u}|p|^2}.
\end{align*}
\end{proof}
\section{$\Phi$ is contractive in $\Lambda$} It remains to show that $\Phi$ is a contraction mapping in $\Lambda$ for sufficiently large $\tau$.
\begin{proposition}Let $f,g \in \Lambda$ and $f_{LR}$ satisfies all the assumptions of the Theorem \ref{mainthm}, then, for sufficietly large $\tau$, $\Phi$ satisfies
\begin{align*}
\sup_{x\in[0,1]}||\Phi(f)-\Phi(g)||_{L_2^1} \leq \alpha \sup_{x\in[0,1]}||f-g||_{L_2^1},
\end{align*}
for some constant $0<\alpha<1$.
\end{proposition}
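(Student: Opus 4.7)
The plan is to decompose $\Phi(f) - \Phi(g)$ and control each piece by $\sup_x\|f-g\|_{L^1_2}$ multiplied by a quantity of order $(\ln\tau+1)/\tau$, so that for $\tau$ large the resulting constant is strictly less than $1$. For $p_1 > 0$ (the case $p_1 < 0$ is symmetric), I would split
\begin{align*}
\Phi^+(f) - \Phi^+(g) = I + II + III + IV,
\end{align*}
where $I$ is the difference of the two boundary exponentials times $f_L(p)$; $II = \frac{1}{\tau|p_1|}\int_0^x \bigl(e^{-\int_y^x N_f/\tau|p_1|} - e^{-\int_y^x N_g/\tau|p_1|}\bigr)N_f(y)\mathcal{K}(f)\,dy$; $III = \frac{1}{\tau|p_1|}\int_0^x e^{-\int_y^x N_g/\tau|p_1|}(N_f(y)-N_g(y))\mathcal{K}(f)\,dy$; and $IV = \frac{1}{\tau|p_1|}\int_0^x e^{-\int_y^x N_g/\tau|p_1|}N_g(y)(\mathcal{K}(f)-\mathcal{K}(g))\,dy$.

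For $I$ I would use the elementary inequality $|e^{-a}-e^{-b}| \leq e^{-\min(a,b)}|a-b|$ combined with $N_f, N_g \geq a_l$ and the pointwise bound $|N_f - N_g| \leq \sup_x\|f-g\|_{L^1_2}$; this yields a pointwise estimate by $(x/\tau|p_1|)\sup_x\|f-g\|_{L^1_2}\,f_L(p)$, and after multiplying by $(1+|p|^2)$ the hypothesis $f_{LR}/|p_1| \in L^1_2$ from Theorem \ref{mainthm} gives a bound of order $\tau^{-1}\sup_x\|f-g\|_{L^1_2}$. For $II$ the same inequality produces an extra factor $(x-y)/(\tau|p_1|)$; I would absorb it into the exponential via $\xi e^{-a_l\xi/2} \leq C$, reducing the integral to precisely the form handled by Lemma \ref{same form} (with $a_l/2$ in place of $a_l$) after using the Gaussian pointwise bound $\mathcal{K}(f)(1+|p|^2) \leq C_{l,u}e^{-(a_*/4)|p|^2}$ from Lemma \ref{Bf}. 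For $III$, the same Gaussian bound puts the integrand directly in the form of Lemma \ref{same form}, together with the factor $\sup_x\|f-g\|_{L^1_2}$ pulled out of $|N_f-N_g|$. For $IV$, Proposition \ref{banaBf} provides $|\mathcal{K}(f) - \mathcal{K}(g)|(1+|p|^2) \leq C_{l,u}\sup_x\|f-g\|_{L^1_2}\,e^{-C_{l,u}|p|^2}$, and Lemma \ref{same form} once again controls the resulting integral.

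The main obstacle is controlling the $1/|p_1|$ singularity from the characteristic integration simultaneously with the $(1+|p|^2)$ weight. The boundary contribution is resolved by the integrability assumption $f_{LR}/|p_1| \in L^1_2$; the collision-integral pieces are resolved by the Gaussian momentum decay of $\mathcal{K}(f)$ and of $\mathcal{K}(f)-\mathcal{K}(g)$ established in Section 3 and Section 4, together with the delicate three-region split of $p_1$ underlying Lemma \ref{same form}. Summing the four bounds would yield
\begin{align*}
\sup_{x\in[0,1]}\|\Phi(f)-\Phi(g)\|_{L^1_2} \leq C_{l,u}\!\left(\frac{\ln\tau+1}{\tau}\right)\sup_{x\in[0,1]}\|f-g\|_{L^1_2},
\end{align*}
so choosing $\tau$ large makes $\alpha := C_{l,u}(\ln\tau+1)/\tau$ strictly less than $1$. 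Combined with the preceding section establishing $\Phi:\Lambda\to\Lambda$, Banach's fixed point theorem then supplies the unique fixed point in $\Lambda$, which is the desired mild solution asserted in Theorem \ref{mainthm}.
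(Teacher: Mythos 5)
Your decomposition into four pieces (boundary term plus a telescoping split of the collision integral that isolates the exponential difference, the $N_f-N_g$ difference, and the $\mathcal{K}(f)-\mathcal{K}(g)$ difference) is exactly the paper's $I(f)-I(g)$ followed by $II_1+II_2+II_3$, and you invoke the same three ingredients — Lemma \ref{Bf} for Gaussian decay of $\mathcal{K}$, Lemma \ref{same form} for the $(\ln\tau+1)/\tau$ decay, and Proposition \ref{banaBf} for the Lipschitz continuity of $\mathcal{K}$ — to arrive at the same bound. This is essentially identical to the paper's argument.
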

\begin{proof}
We only estimate $\Phi^+$. Let
\begin{align*}
\Phi^+(f)=I(f)+II(f,f,f),
\end{align*}
where
\begin{align*}
I(f)=e^{-\frac{1}{\tau|p_1|}\int_0^xN_f(y)dy}f_L(p),
\end{align*}
and
\begin{align*}
II(f,g,h)=\frac{1}{\tau|p_1|}\int_0^xe^{-\frac{1}{\tau|p_1|}\int_y^xN_f(z)dz}N_g(y)\mathcal{K}(h)dy.
\end{align*}
\noindent$\bullet$ {\bf Estimates for $I(f)-I(g)$}:
Consider
\begin{align*}
I(f)-I(g)=\left\{e^{-\frac{1}{\tau|p_1|}\int_0^xN_f(y)dy}-e^{-\frac{1}{\tau|p_1|}\int_0^xN_g(y)dy}\right\}f_L(p),
\end{align*}
which, by mean value theorem, can be rewritten as
\begin{align}\label{MVT}
\begin{split}
e^{-\frac{1}{\tau|p_1|}\int_0^xN_f(y)dy}&-e^{-\frac{1}{\tau|p_1|}\int_0^xN_g(y)dy}	\cr
&= -\frac{1}{\tau|p_1|}e^{-\frac{1}{\tau|p_1|}\int_0^x(1-\mu)N_f(y)+\mu N_g(y)dy}\int_0^xN_f(y)-N_g(y)dy,
\end{split}
\end{align}
for some $0<\mu<1$.
Since we have $N_f,N_g\geq a_l$, we see that
\begin{align*}
|I(f)-I(g)|
&\leq \frac{1}{\tau|p_1|}e^{-\frac{1}{\tau|p_1|}\int_0^x(1-\theta)N_f(y)+\theta N_g(y)dy}\int_0^x|N_f(y)-N_g(y)|dyf_L(p)	\cr
&\leq \frac{1}{\tau|p_1|}e^{-\frac{-a_lx}{\tau|p_1|}}\sup_{x\in[0,1]}||f-g||_{L^1_2}f_L(p),
\end{align*}
where we used
\begin{align*}
	|N_f(y)-N_g(y)|\leq \sup_{x\in[0,1]}||f-g||_{L^1_2}.
\end{align*}
Now we integrate each term with respect to $(1+|p|^2)dp$ on $p_1>0$:
\begin{align*}
\int_{p_1>0}|I(f)-I(g)|(1+|p|^2)dp	&\leq \int_{p_1>0}\frac{1}{\tau|p_1|}e^{-\frac{-a_lx}{\tau|p_1|}}f_L(p)(1+|p|^2)dp\sup_{x\in[0,1]}||f-g||_{L^1_2}	\cr
&\leq \frac{1}{\tau}\int_{p_1>0}\frac{1}{|p_1|}f_L(p)(1+|p|^2)dp\sup_{x\in[0,1]}||f-g||_{L^1_2}	\cr
&\leq \frac{1}{\tau}(a_s+c_s)\sup_{x\in[0,1]}||f-g||_{L^1_2},
\end{align*}
to get the desired result.\newline
\noindent$\bullet$ {\bf Estimates for $II(f)-II(g)$:}
We split it as
\begin{align*}
II(f,f,f)-II(g,g,g)&=\{II(f,f,f)-II(g,f,f)\}+\{II(g,f,f)-II(g,g,f)\}	\cr
& +\{II(g,g,f)-II(g,g,g)\} \cr
&= II_1+II_2+II_3.
\end{align*}
\noindent (i) Estimate of $II_1$: In a similar manner as in \eqref{MVT}, we get
\begin{align*}
\bigg|e^{-\frac{1}{\tau|p_1|}\int_y^xN_f(z)dz}-e^{-\frac{1}{\tau|p_1|}\int_y^xN_g(z)dz}\bigg|
&\leq \frac{x-y}{\tau|p_1|}e^{-\frac{a_l(x-y)}{\tau|p_1|}}\sup_{x\in[0,1]}||f-g||_{L^1_2}	\cr
&\leq \frac{C}{a_l}e^{-\frac{a_l(x-y)}{2\tau|p_1|}}\sup_{x\in[0,1]}||f-g||_{L^1_2}.
\end{align*}
In last line, we used $xe^{-x}\leq Ce^{-\frac{x}{2}}$. From this, we see that
\begin{align*}
\int_{p_1>0}&|II_1|(1+|p|^2)dp \cr
&\leq \int_{p_1>0}\frac{1}{\tau|p_1|}\int_0^x\bigg|e^{-\frac{1}{\tau|p_1|}\int_y^xN_f(z)dz}-e^{-\frac{1}{\tau|p_1|}\int_y^xN_g(z)dz}\bigg|N_f(y)\mathcal{K}(f)(1+|p|^2)dydp	\cr
&\leq  \frac{Ca_u}{a_l}\int_{p_1>0}\frac{1}{\tau|p_1|}\int_0^xe^{-\frac{a_l(x-y)}{2\tau|p_1|}}\mathcal{K}(f)(1+|p|^2)dydp\sup_{x\in[0,1]}||f-g||_{L^1_2}.
\end{align*}
We then apply Lemma \ref{Bf}
\begin{align*}
\int_{p_1>0}|II_1|(1+|p|^2)dp
&\leq  C_{l,u}\int_{p_1>0}\frac{1}{\tau|p_1|}\int_0^xe^{-\frac{a_l(x-y)}{2\tau|p_1|}}e^{-C_{l,u}|p|^2}dydp\sup_{x\in[0,1]}||f-g||_{L^1_2},
\end{align*}
and Lemma \ref{same form} to obtaini
\begin{align*}
\int_{p_1>0}|II_1|(1+|p|^2)dp
&\leq C_{l,u}\left(\frac{\ln\tau+1}{\tau}\right)\sup_{x\in[0,1]}||f-g||_{L^1_2}.
\end{align*}
\noindent (i) Estimate of $II_2$: The estimate for $II_2$ is treated similarly:
\begin{align*}
\int_{p_1>0}&|II_2|(1+|p|^2)dp \cr
&\leq \int_{p_1>0}\frac{1}{\tau|p_1|}\int_0^xe^{-\frac{1}{\tau|p_1|}\int_y^xN_g(z)dz}|N_f(y)-N_g(y)|\mathcal{K}(f)(1+|p|^2)dydp	\cr
&\leq \int_{p_1>0}\frac{1}{\tau|p_1|}\int_0^xe^{-\frac{a_l(x-y)}{\tau|p_1|}}C_{l,u}e^{-C_{l,u}|p|^2}dydp||f-g||_{L^1_2}	\cr
&\leq  C_{l,u}\left(\frac{\ln\tau+1}{\tau}\right)\sup_{x\in[0,1]}||f-g||_{L^1_2}.
\end{align*}
\noindent (iii) Estimate of $II_3$: we integrate with respect to $(1+|p|^2)dp$ on $p_1>0$ to obtain
\begin{align*}
\int_{p_1>0}&|II_3|(1+|p|^2)dp \cr
&\leq  \int_{p_1>0}\frac{1}{\tau|p_1|}\int_0^xe^{-\frac{1}{\tau|p_1|}\int_y^xN_g(z)dz}N_g(y)|\mathcal{K}(f)-\mathcal{K}(g)|(1+|p|^2)dydp.
\end{align*}
We then apply the continuity property of $\mathcal{K}$ in Proposition \ref{banaBf}:
\begin{align*}
\int_{p_1>0}&|II_3|(1+|p|^2)dp \cr
&\leq a_u\int_{p_1>0}\frac{1}{\tau|p_1|}\int_0^xe^{-\frac{a_l(x-y)}{\tau|p_1|}}C_{l,u}e^{-C_{l,u}|p|^2}(1+|p|^2)dydp\sup_{x\in[0,1]}||f-g||_{L^1_2}	\cr
&\leq C_{l,u}\left(\frac{\ln\tau+1}{\tau}\right)\sup_{x\in[0,1]}||f-g||_{L^1_2}.
\end{align*}
Combining all these estimates, we get the desired estimate for $\Phi_+$:
\begin{align*}
\sup_{x\in[0,1]}||\Phi^+(f)-\Phi^+(g)||_{L_2^1} \leq C_{l,u}\left(\frac{1}{\tau}(a_s+c_s)+\frac{\ln\tau+1}{\tau}\right)\sup_{x\in[0,1]}||f-g||_{L^1_2}.
\end{align*}
The corresponding estimate for $\Phi_-$ can be derived in an identical manner:
\begin{align*}
\sup_{x\in[0,1]}||\Phi^-(f)-\Phi^-(g)||_{L_2^1} \leq C_{l,u}\left(\frac{1}{\tau}(a_s+c_s)+\frac{\ln\tau+1}{\tau}\right)\sup_{x\in[0,1]}||f-g||_{L^1_2}.
\end{align*}
This gives the desired contractive estimate for $\Phi$ when $\tau$ is sufficiently large.
\end{proof}

\noindent {\bf Acknowledgement:}
S.-B. Yun was supported by Basic Science  Research  Program  through  the  National  Research  Foundation  of  Korea  (NRF)  funded  by  the Ministry of Education (NRF-2016R1D1A1B03935955)
%
\bibliographystyle{amsplain}

\end{document}